\documentclass[a4paper, 12pt]{amsart}
\usepackage{hyperref,upref,amssymb,tikz,amssymb, amstext, amscd, amsmath, amsthm}

\setlength{\textwidth}{16cm}
\setlength{\textheight}{23cm}
\setlength{\oddsidemargin}{0cm}
\setlength{\evensidemargin}{0cm}
\setlength{\topmargin}{0cm}

\numberwithin{equation}{section}

\newtheorem{thm}{Theorem}[section]
\newtheorem{lemma}[thm]{Lemma}
\newtheorem{prop}[thm]{Proposition}
\newtheorem{cor}[thm]{Corollary}
\newtheorem{conj}[thm]{Conjecture}
\theoremstyle{definition}
\newtheorem{defn}[thm]{Definition}
\newtheorem{ntn}[thm]{Notation}
\theoremstyle{remark}

\newtheorem{rmk}[thm]{Remark}

%      Blackboard bold letters

\newcommand{\bN}{{\mathbb{N}}}

%\newcommand{\b\ell}{{\mathbf{\ell}}}

%      Capital script letters

\renewcommand{\O}{{\mathcal{O}}}

%Fraktur letters

\newcommand{\fs}{{\mathfrak{s}}}
\newcommand{\ft}{{\mathfrak{t}}}

%      Lower case bold letters

%Roman letters for math

%Greek Letters

%\renewcommand{\phi}{\varphi}
\newcommand{\upchi}{{\raise.35ex\hbox{\ensuremath{\chi}}}}

%      Text used in equations

%Operators

\newcommand{\Aut}{\operatorname{Aut}}

\newcommand{\id}{{\operatorname{id}}}

\newcommand{\spn}{\operatorname{span}}

%      Useful shortforms

\newcommand{\ca}{\mathrm{C}^*}

\newcommand{\mt}{\varnothing}

%%%%%%%%%%%%%%%%%%%%%%%%%%%%%%%%%%%%%%%%%%%%

 %{\varphi_{\epsilon_1}}

\title[Remarks On the K-theory of C*-Algebras of Products of Odometers]{Remarks On the K-theory of C*-Algebras of Products of Odometers}

%\date{}

\author{Hui Li}

\address{Hui Li, Department of Mathematics and Physics, North China Electric Power University, Beijing 102206, China}
\email{lihui8605@hotmail.com}

\subjclass[2010]{46L05}
\keywords{C*-algebra; K-theory; self-similar $k$-graph; products of odometers}
\thanks{The author was supported by National Natural Science Foundation of China (Grant No.~11801176).}

\begin{document}

\begin{abstract}
we pose a conjecture on the K-theory of the self-similar $k$-graph C*-algebra of a standard product of odometers. We generalize the C*-algebra $\mathcal{Q}_S$ to any subset of $\mathbb{N}^\times \setminus \{1\}$ and then realize it as the self-similar $k$-graph C*-algebra of a standard product of odometers.
\end{abstract}

\maketitle

\section{Introduction}

Motivated by the work of Bost and Connes in \cite{BostConnes:SM(95}, Cuntz in \cite{Cun08} constructed a C*-algebra $\mathcal{Q}_{\mathbb{N}}$ which is strongly related to the $ax+b$-semigroup over $\mathbb{N}$. Later Li in \cite{Li12} defined the notion of semigroup C*-algebras and Brownlowe, Ramagge, Robertson, and Whittaker in \cite{BRRW14} defined the boundary quotients of semigroup C*-algebras. Their work branches out to many interesting mathematical areas, hence generates a very popular area of C*-algebras. 

For any nonempty subset $S \subset \mathbb{N}^\times \setminus \{1\}$ consisting of mutually coprime numbers, Barlak, Omland, and Stammeier in \cite{MR3784245} defined a C*-algebra $\mathcal{Q}_S$ which is a direct generalization of the Cuntz algebra $\mathcal{Q}_{\mathbb{N}}$ (by letting $S$ be the set of all prime numbers). Barlak, Omland, and Stammeier decomposed the $K$-theory of $\mathcal{Q}_S$ into a free abelian part which they solved in \cite{MR3784245} and a highly nontrivial torsion part. At the end of \cite{MR3784245} Barlak, Omland, and Stammeier made a conjecture to the torsion part which is equivalent to a conjecture of $k$-graph C*-algebras about whether the K-theory of the single-vertex $k$-graph C*-algebra is independent of the factorization rule. Very recently, many authors also found the $k$-graph algebra conjecture is highly connected with the famous Yang-Baxter equation (see for example \cite{FGS20, MR3454414}). Therefore, the conjecture of Barlak, Omland, and Stammeier and the conjecture about the $k$-graph C*-algebra are extremely important in many ways.

Self-similar actions appear naturally in geometric group theory. Nekrashevych was the first who systematically built a bridge between the C*-algebra theory and the self-similar actions (on rooted trees, see \cite{MR2119267, Nek09}). Nekrashevych's work was recently generalized to the self-similar directed graphs, self-similar directed graph C*-algebras by Exel and Pardo in \cite{EP17}, and to the self-similar $k$-graphs, self-similar $k$-graph C*-algebras by Li and Yang in \cite{LY19_3}. 

The purpose of this paper is two fold. Firstly we pose a conjecture (see Conjecture~\ref{decompose K(O_G,Lambda)}) on the K-theory of the self-similar $k$-graph C*-algebra of a standard product of odometers (see Definition~\ref{standard prod of odo}). Secondly we generalize the C*-algebra $\mathcal{Q}_S$ to any subset of $\mathbb{N}^\times \setminus \{1\}$ and then realize it as the self-similar $k$-graph C*-algebra of a standard product of odometers. Therefore Conjecture~\ref{decompose K(O_G,Lambda)} naturally includes the conjecture of Barlak, Omland, and Stammeier and the conjecture about the $k$-graph C*-algebra. We hope the self-similar $k$-graph C*-algebra setting will provide insight on how to solve these conjectures in the future.

Our paper is organized as follows. In Section~2, we provide the background material about self-similar $k$-graph C*-algebras. In Section~3, we use the skew product approach to stabilize every self-similar $k$-graph C*-algebra. In Section~4, we pose a conjecture (see Conjecture~\ref{decompose K(O_G,Lambda)}) on the K-theory of the self-similar $k$-graph C*-algebra of a standard product of odometers and then we discuss the relationship between Conjecture~\ref{decompose K(O_G,Lambda)} and the conjecture of Barlak, Omland, and Stammeier.

\section{Preliminaries}

In this section, we recap the background of $k$-graphs, $k$-graph C*-algebras, self-similar $k$-graph, self-similar $k$-graph C*-algebras from \cite{KP00, LY19_3, LY19_4}.

\begin{defn}
Let $k$ be a positive integer which is allowed to be infinity. A countable small category $\Lambda$ is called a \emph{$k$-graph} if there exists a functor $d:\Lambda \to \mathbb{N}^k$ satisfying that for $\mu\in\Lambda, p,q \in \mathbb{N}^k$ with $d(\mu)=p+q$, there exist unique $\alpha\in d^{-1}(p)$ and $\beta\in d^{-1}(q) $ with $s(\alpha)=r(\beta)$ such that $\mu=\alpha\beta$.
\end{defn}

\begin{ntn}
Let $\Lambda$ be a $k$-graph. For $p \in \mathbb{N}^k$, denote by $\Lambda^p:=d^{-1}(p)$. For $A,B \subset \Lambda$, denote by $AB:=\{\mu\nu:\mu \in A,\nu\in B,s(\mu)=r(\nu)\}$. Denote by $\{e_i\}_{i=1}^{k}$ the standard basis of $\mathbb{N}^k$. For $1 \leq n \leq k$, denote by $\mathsf{1}_n:=\sum_{i=1}^{n}e_i$. For $p \in \mathbb{N}^k,\lambda \in \mathbb{T}^k$, denote by $\lambda^p:=\prod_{i=1}^{k}\lambda_i^{p_i}$.
\end{ntn}
%by $p \lor q$ (resp.~$p \wedge q$) the coordinatewise maximum (resp.~minimum) of $p$ and $q$; If $k \neq \infty$, then for $n \geq 1$, denote by $z_n:=n(\sum_{i=1}^{k}e_i)$.For $\mu\in \Lambda$, denote by $\vert \mu\vert:=\sum_{i=1}^{k}d(\mu)_i$. , and by $\Lambda^{\leq p}:=\{\mu \in \Lambda:d(\mu) \leq p\}$
\begin{defn}
Let $\Lambda$ be a $k$-graph. Then $\Lambda$ is said to be \emph{row-finite} if $\vert v\Lambda^{p}\vert<\infty$ for all $v \in \Lambda^0$ and $p \in \mathbb{N}^k$. $\Lambda$ is said to be \emph{source-free} if $v\Lambda^{p} \neq \mt$ for all $v \in \Lambda^0$ and $p \in \mathbb{N}^k$. $\Lambda$ is said to be \emph{finite} if $|\Lambda^n|<\infty$ for all $n\in \bN^k$.
\end{defn}

\textsf{Throughout the rest of this paper, all $k$-graphs are assumed to be row-finite and source-free.}

\begin{defn}
Let $\Lambda$ be a $k$-graph. Then the \emph{$k$-graph C*-algebra} $\O_\Lambda$ is defined to be the universal C*-algebra generated by a family of partial isometries $\{s_\lambda:\lambda\in\Lambda\}$ (Cuntz-Krieger $\Lambda$-family) satisfying
\begin{enumerate}
\item $\{s_v\}_{v \in \Lambda^0}$ is a family of mutually orthogonal projections;
\item $s_{\mu\nu}=s_{\mu} s_{\nu}$ if $s(\mu)=r(\nu)$;
\item $s_{\mu}^* s_{\mu}=s_{s(\mu)}$ for all $\mu \in \Lambda$; and
\item $s_v=\sum_{\mu \in v \Lambda^{p}}s_\mu s_\mu^*$ for all $v \in \Lambda^0, p \in \mathbb{N}^k$.
\end{enumerate}
\end{defn}

%%%%%%%%%%%
\begin{defn}
Let $G$ be a countable discrete group acting on a $k$-graph $\Lambda$, and let $\vert:G\times \Lambda \to G$ be a map. Then $(G,\Lambda)$ is called a \emph{self-similar $k$-graph} if
\begin{enumerate}
\item $G \cdot \Lambda^p \subset \Lambda^p$ for all $p \in \mathbb{N}^k$;
\item $s(g \cdot \mu)=g \cdot s(\mu)$ and $r(g \cdot \mu)=g \cdot r(\mu)$ for all $g \in G,\mu \in \Lambda$;
\item
$g\cdot (\mu\nu)=(g \cdot \mu)(g \vert_\mu \cdot \nu)$ for all $g \in G,\mu,\nu \in \Lambda$ with $s(\mu)=r(\nu)$;

\item
$g \vert_v =g$ for all $g \in G,v \in \Lambda^0$;

\item
$g \vert_{\mu\nu}=g \vert_\mu \vert_\nu$ for all $g \in G,\mu,\nu \in \Lambda$ with $s(\mu)=r(\nu)$;

\item
$1_G \vert_{\mu}=1_G$ for all $\mu \in \Lambda$;

\item
$(gh)\vert_\mu=g \vert_{h \cdot \mu} h \vert_\mu$ for all $g,h \in G,\mu \in \Lambda$.
\end{enumerate}
%Furthermore, $(G,\Lambda)$ is said to be \emph{pseudo free} if for any $g \in G,\mu \in \Lambda,g \cdot \mu=\mu,g \vert_\mu=1_G \implies g=1_G$.
\end{defn}

\begin{defn}
\label{D:ssfamily}
Let $(G,\Lambda)$ be a self-similar $k$-graph. Define $\mathcal{O}_{G,\Lambda}^\dagger$ to be the universal unital C*-algebra generated by a Cuntz-Krieger $\Lambda$-family $\{s_\mu:\mu\in\Lambda\}$ and a family of unitaries $\{u_g:g \in G\}$ satisfying
\begin{enumerate}
\item $u_{gh}=u_g u_h$ for all $g,h \in G$;
\item\label{u_g s_mu} $u_g s_\mu=s_{g \cdot \mu} u_{g \vert_\mu}$ for all $g \in G,\mu \in \Lambda$.
\end{enumerate}
Define $\mathcal{O}_{G,\Lambda}:=\overline{\spn}\{s_\mu u_g s_\nu^*:s(\mu)=g \cdot s(\nu)\}$, which is called the \emph{self-similar $k$-graph C*-algebra} of $(G,\Lambda)$.
\end{defn}

\begin{rmk}
There exists a strongly continuous homomorphism $\gamma:\mathbb{T}^k \to \Aut(\mathcal{O}_{G,\Lambda} )$, which is called the \emph{gauge action}, such that $\gamma_\lambda(s_\mu u_g s_\nu^*):=\lambda^{d(\mu)-d(\nu)}s_\mu u_g s_\nu^*$ for all $\mu,\nu \in \Lambda,g\in G$ with $s(\mu)=g \cdot s(\nu)$. By the gauge-invariant uniqueness theorem $\mathcal{O}_{\Lambda}$ embeds in $\mathcal{O}_{G,\Lambda}$ naturally.
\end{rmk}

\begin{rmk}
If $\Lambda$ is finite, then $\mathcal{O}_{G,\Lambda}$ is a unital C*-algebra with the unit $\sum_{v \in \Lambda^0}s_v$.
\end{rmk}

\begin{defn}[{\cite[Definition~4.6]{LY19}}]\label{standard prod of odo}
Let $(G,\Lambda)$ be a self-similar $k$-graph. Then $(G,\Lambda)$ is called a \emph{product of odometers} if
\begin{enumerate}
\item $G=\mathbb{Z}$;
\item $\Lambda^0=\{v\}$;
\item $\Lambda^{e_i}:=\{x_{\fs}^i\}_{\fs =0}^{n_i-1},n_i>1$ for all $1\le i\le k$;
\item $1\cdot {x_\fs^i}=x_{(\fs+1)\ \text{mod } n_i}^i$ for all $1\le i\le k,0\le \fs\le n_i-1$;
\item $1|_{x_\fs^i}= \begin{cases}
    0 &\text{ if }0\le \fs< n_i-1\\
    1 &\text{ if }\fs=n_i-1
\end{cases}
\text{ for all } 1\le i\le k,0\le \fs\le n_i-1$.
\end{enumerate}
Moreover, if $x^i_\fs x^j_\ft = x^j_{\ft'} x^i_{\fs'}$ for all $1\leq i<j \leq k,0\le \fs,\fs'\le n_i-1,0\le \ft,\ft'\le n_j-1$ with $\fs+\ft n_i=\ft'+\fs' n_j$, then $(G,\Lambda)$ is called a \emph{standard product of odometers}. Denote by $g \Lambda:=gcd\{n_i-1:1 \leq i \leq k\}$.
\end{defn}

\section{Self-Similar Skew Products}

Kumjian and Pask in \cite{KP00} defined the notion of skew products of $k$-graphs. That is, given a $k$-graph $\Lambda$ and a functor from $\Lambda$ into a group $K$, they endowed the Cartesian product $\Lambda \times K$ with a $k$-graph structure and it is called the skew product of $\Lambda$ and $K$. Since any $k$-graph $\Lambda$ carries a functor which is the degree map $d:\Lambda \to \mathbb{Z}^k$, there is a natural skew product $\Lambda$ and $\mathbb{Z}^k$, denoted by $\Lambda \star \mathbb{Z}^k$, induced from the degree map (see Definition~\ref{define skew product}).

In this section, for any self-similar $k$-graph $(G,\Lambda)$, we construct a self-similar $k$-graph $(G,\Lambda \star \mathbb{Z}^k)$ and we show that $\mathcal{O}_{G,\Lambda \star \mathbb{Z}^k} \cong \mathcal{O}_{G,\Lambda} \rtimes_\gamma \mathbb{T}^k$, where $\gamma:\mathbb{T}^k \to \Aut(\mathcal{O}_{G,\Lambda} )$ is the gauge action.

\begin{defn}[{\cite[Definition~5.1]{KP00}}]\label{define skew product}
Let $\Lambda$ be a $k$-graph. Define $\Lambda \star \mathbb{Z}^k:=\Lambda \times \mathbb{Z}^k$; define $(\Lambda \star \mathbb{Z}^k)^0:=\Lambda^0 \times \mathbb{Z}^k$; for $(\mu,z) \in \Lambda \star \mathbb{Z}^k$, define $s(\mu,z):=(s(\mu),d(\mu)+z), r(\mu,z):=(r(\mu),z)$; for $(\mu,z),(\nu,d(\mu)+z) \in \Lambda \star \mathbb{Z}^k$ with $s(\mu)=r(\nu)$, define $(\mu,z) \cdot (\nu,d(\mu)+z):=(\mu\nu,z)$; for $(\mu,z) \in \Lambda \star \mathbb{Z}^k$, define $d(\mu,z):=d(\mu)$. Then $\Lambda \star \mathbb{Z}^k$ is a $k$-graph.
\end{defn}

\begin{defn}
Let $(G,\Lambda)$ be a self-similar $k$-graph. For $g \in G,(\mu,z) \in \Lambda \star \mathbb{Z}^k$, define $g \cdot(\mu,z):=(g \cdot \mu,z)$ and $g \vert_{(\mu,z)}:=g \vert_\mu$. Then $(G,\Lambda \star \mathbb{Z}^k)$ is a self-similar $k$-graph. %If $(G,\Lambda)$ is pseudo free then $(G,\Lambda \star \mathbb{Z}^k)$ is also pseudo free.
\end{defn}

\begin{ntn}
Let $(G,\Lambda)$ be a self-similar $k$-graph. Denote by $\{s_\mu,u_g\}$ the generators of $\mathcal{O}_{G,\Lambda}^\dagger$, denote by $\{t_{(\mu,z)},v_g\}$ the generators of $\mathcal{O}_{G,\Lambda \star \mathbb{Z}^k}^\dagger$. We have $\mathcal{O}_{G,\Lambda \star \mathbb{Z}^k}=\overline{\spn}\{t_{(\mu,z-d(\mu))}v_g t_{(\nu,z-d(\nu))}^*:\mu,\nu \in \Lambda,g \in G, z \in \mathbb{Z}^k,s(\mu)=g \cdot s(\nu)\}$.
\end{ntn}

\begin{lemma}\label{operators in multiplier}
Let $A$ be a C*-algebra, let $S$ be a set of generators of $A$, and let $(T_i)_{i \in I}$ be a net of operators in $M(A)$ such that $(T_i)_{i \in I}$ is uniformly bounded. Suppose that for any $a \in S \cup S^*$, the nets $(T_ia)_{i \in I}$ and $(T_i^*a)_{i \in I}$ converge. Then $(T_i)_{i \in I},(T_i^*)_{i \in I}$ converge strictly and $(\lim_{i \in I}T_i)^*=\lim_{i \in I}T_i^*$.
\end{lemma}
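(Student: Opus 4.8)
The plan is to verify the hypotheses of a standard strict-convergence criterion and then identify the limits. Recall that a net $(T_i)_{i\in I}$ in $M(A)$ converges strictly if and only if both $(T_ia)_{i\in I}$ and $(aT_i)_{i\in I}$ converge in norm for every $a\in A$; the limit is then the operator $T\in M(A)$ defined by $Ta:=\lim_i T_ia$ and $aT:=\lim_i aT_i$. So the first step is to pass from convergence of $(T_ia)_{i\in I}$ on the generating set $S\cup S^*$ to convergence of $(T_ia)_{i\in I}$ for all $a\in A$. Since $S\cup S^*$ generates $A$ as a C*-algebra, finite linear combinations of products of elements of $S\cup S^*$ are dense in $A$. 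For a product $a=a_1a_2\cdots a_n$ with each $a_j\in S\cup S^*$, write $T_ia=(T_ia_1)a_2\cdots a_n$; since $(T_ia_1)_{i\in I}$ converges in norm and $\norm{a_2\cdots a_n}$ is a fixed constant, $(T_ia)_{i\in I}$ converges. Linearity extends this to the dense $*$-subalgebra, and then the uniform bound $\sup_i\norm{T_i}=:M<\infty$ gives an $\ep/3$-argument: for general $a\in A$ pick $b$ in the dense subalgebra with $\norm{a-b}<\ep/(3M)$, so $(T_ia)_{i\in I}$ is Cauchy, hence convergent. The same argument applied to $(T_i^*)_{i\in I}$ shows $(T_i^*a)_{i\in I}$ converges for all $a\in A$.

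The second step is to handle the right-multiplication nets $(aT_i)_{i\in I}$. For $a\in S\cup S^*$ we have $aT_i=(T_i^*a^*)^*$, and $(T_i^*a^*)_{i\in I}$ converges in norm by hypothesis (note $a^*\in S\cup S^*$); since the adjoint is an isometry, $(aT_i)_{i\in I}$ converges in norm as well. Extending to products, linear combinations, and then all of $A$ by the same density-plus-uniform-bound argument as above gives norm convergence of $(aT_i)_{i\in I}$ for every $a\in A$. Hence both one-sided nets converge for all $a\in A$, which is exactly the criterion for strict convergence; let $T:=\lim_i T_i$ in the strict topology. Running the identical reasoning with $T_i$ replaced by $T_i^*$ (whose adjoint net is $(T_i)_{i\in I}$, again uniformly bounded, with the required convergence on $S\cup S^*$ by symmetry of the hypothesis) shows $(T_i^*)_{i\in I}$ converges strictly; call its limit $R$.

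The third step is the identity $T^*=R$, i.e.\ $(\lim_i T_i)^*=\lim_i T_i^*$. This is immediate from the definitions of the limiting multipliers together with continuity of the adjoint: for every $a\in A$,
\[
T^*a=(a^*T)^*=\bigl(\lim_i a^*T_i\bigr)^*=\lim_i(a^*T_i)^*=\lim_i T_i^*a=Ra,
\]
and similarly $aT^*=Ra$ on the other side, so $T^*=R$ as elements of $M(A)$.

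I do not expect a genuine obstacle here; the statement is a routine but useful packaging of the fact that strict convergence of a uniformly bounded net can be checked on generators. The only points requiring a little care are (i) remembering to use the uniform bound to pass from a dense subalgebra to all of $A$ — without it the conclusion can fail — and (ii) making sure the hypothesis on $(T_i^*a)_{i\in I}$ is used, via adjoints, to get the right-multiplication nets, since strict convergence genuinely requires control of both $aT_i$ and $T_ia$.
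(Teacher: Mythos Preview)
Your proof is correct and complete; the paper's own proof consists solely of the sentence ``It is straightforward to check,'' so you have simply supplied the routine details the author left to the reader. The only minor slip is the typo ``$aT^*=Ra$'' in the last line (it should read $aT^*=aR$), but since $T^*a=Ra$ for all $a\in A$ already forces $T^*=R$ in $M(A)$, this does not affect the argument.
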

\begin{proof}
It is straightforward to check.
\end{proof}

\begin{thm}[{cf. \cite[Corollary~5.3]{KP00}}]\label{O_{G,Lambda} rtimes T^k iso to O_{G,Lambda star Z^k}}
Let $(G,\Lambda)$ be a self-similar $k$-graph. Then $\mathcal{O}_{G,\Lambda} \rtimes_\gamma \mathbb{T}^k$ is isomorphic to $\mathcal{O}_{G,\Lambda \star \mathbb{Z}^k}$.
\end{thm}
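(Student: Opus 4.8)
The plan is to mimic the classical Kumjian--Pask argument for ordinary $k$-graphs (\cite[Corollary~5.3]{KP00}), adapted to carry along the group of unitaries. First I would set up the target homomorphism $\Phi:\mathcal{O}_{G,\Lambda}\rtimes_\gamma\mathbb{T}^k\to\mathcal{O}_{G,\Lambda\star\mathbb{Z}^k}$ using the universal property of the crossed product. Inside $M(\mathcal{O}_{G,\Lambda\star\mathbb{Z}^k})$, for each $z\in\mathbb{Z}^k$ define the projection $q_z:=\sum_{v\in\Lambda^0}t_{(v,z)}$ (a strictly convergent sum since $\Lambda^0$ may be infinite; this is where Lemma~\ref{operators in multiplier} is used), and for each $\lambda\in\mathbb{T}^k$ define $w_\lambda:=\sum_{z\in\mathbb{Z}^k}\lambda^z q_z$, again a strict limit of its finite partial sums, uniformly bounded by $1$ since the $q_z$ are mutually orthogonal projections summing strictly to $1$. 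One checks $\lambda\mapsto w_\lambda$ is a strictly continuous unitary representation of $\mathbb{T}^k$. Then I would set $\pi(s_\mu):=\sum_{z\in\mathbb{Z}^k}t_{(\mu,z-d(\mu))}$ and $\pi(u_g):=v_g$; these land in $M(\mathcal{O}_{G,\Lambda\star\mathbb{Z}^k})$, and using the factorisation property of $\Lambda\star\mathbb{Z}^k$ together with $s(g\cdot\mu)=g\cdot s(\mu)$ one verifies that $\{\pi(s_\mu)\}$ is a Cuntz--Krieger $\Lambda$-family and that $\pi(u_g)\pi(s_\mu)=\pi(s_{g\cdot\mu})\pi(u_{g|_\mu})$, so $\pi$ is a homomorphism of $\mathcal{O}_{G,\Lambda}^\dagger$; its image multiplied out shows $\pi(\mathcal{O}_{G,\Lambda})\subseteq M(\mathcal{O}_{G,\Lambda\star\mathbb{Z}^k})$ actually lands in a subalgebra that together with the $w_\lambda$ generates $\mathcal{O}_{G,\Lambda\star\mathbb{Z}^k}$. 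The covariance identity $w_\lambda\,\pi(s_\mu)\,w_\lambda^*=\lambda^{d(\mu)}\pi(s_\mu)=\pi(\gamma_\lambda(s_\mu))$ and $w_\lambda\,\pi(u_g)\,w_\lambda^*=\pi(u_g)$ is the key computation: it follows from $q_z\,t_{(\mu,z'-d(\mu))}=\delta_{z,z'-d(\mu)}\,t_{(\mu,z'-d(\mu))}$ applied termwise. By universality of the crossed product this yields $\Phi:\mathcal{O}_{G,\Lambda}\rtimes_\gamma\mathbb{T}^k\to\mathcal{O}_{G,\Lambda\star\mathbb{Z}^k}$ with $\Phi(s_\mu)=\pi(s_\mu)$, $\Phi(u_g)=v_g$, $\Phi(f)=w(f)$ for $f\in C(\mathbb{T}^k)$, and its range contains all the generators $t_{(\mu,z-d(\mu))}v_g t_{(\nu,z-d(\nu))}^*$, hence $\Phi$ is surjective.

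Next I would build the inverse. Represent $\mathcal{O}_{G,\Lambda}\rtimes_\gamma\mathbb{T}^k$ on $L^2(\mathbb{T}^k,\mathcal{O}_{G,\Lambda})$ (or work with its canonical copies $\widehat{\gamma_z}$ of the dual-action spectral projections), decompose $1=\sum_{z\in\mathbb{Z}^k}E_z$ where $E_z$ are the spectral projections of the $\mathbb{T}^k$-representation coming from $C(\mathbb{T}^k)$, and set $T_{(\mu,z)}:=E_{?}\,s_\mu\,E_{?}$ with the bookkeeping arranged exactly so that $r(\mu,z)\leftrightarrow E_z$ and $s(\mu,z)\leftrightarrow E_{d(\mu)+z}$, together with $V_g:=\sum_z E_z u_g E_z$ (legitimate strict sums again by Lemma~\ref{operators in multiplier}). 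One then checks the relations of Definition~\ref{define skew product} and Definition~\ref{D:ssfamily} for the self-similar $k$-graph $(G,\Lambda\star\mathbb{Z}^k)$ — in particular $g\cdot(\mu,z)=(g\cdot\mu,z)$ forces $V_g T_{(\mu,z)}=T_{(g\cdot\mu,z)}V_{g|_\mu}$, which reduces to the relation $u_g s_\mu=s_{g\cdot\mu}u_{g|_\mu}$ in $\mathcal{O}_{G,\Lambda}$ sandwiched between spectral projections — and obtain by universality a homomorphism $\Psi:\mathcal{O}_{G,\Lambda\star\mathbb{Z}^k}\to\mathcal{O}_{G,\Lambda}\rtimes_\gamma\mathbb{T}^k$. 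A direct check on generators shows $\Psi\circ\Phi=\mathrm{id}$ and $\Phi\circ\Psi=\mathrm{id}$, so $\Phi$ is an isomorphism. Alternatively, to avoid constructing $\Psi$ by hand, I would instead prove $\Phi$ is injective via the gauge-invariant uniqueness theorem for $\mathcal{O}_{G,\Lambda\star\mathbb{Z}^k}$: the dual action $\widehat{\gamma}$ of $\mathbb{Z}^k$ on $\mathcal{O}_{G,\Lambda}\rtimes_\gamma\mathbb{T}^k$ together with the $\mathbb{Z}^k$-grading of $\mathcal{O}_{G,\Lambda\star\mathbb{Z}^k}$ from the $\star\mathbb{Z}^k$ component intertwines appropriately, $\Phi$ is equivariant, and it is nonzero on every vertex projection $t_{(v,z)}$; that gives injectivity for free.

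The main obstacle I anticipate is purely bookkeeping: the $k$-graph $\Lambda$ is allowed to be infinite (infinitely many vertices, and $k$ may be $\infty$), so every sum that appears — $q_z$, $w_\lambda$, $\pi(s_\mu)$, $V_g$ — is an infinite sum that only converges strictly in a multiplier algebra, and one must invoke Lemma~\ref{operators in multiplier} each time to guarantee convergence, boundedness, and compatibility with adjoints. Keeping the index shifts between $\mu$, $z$, $z-d(\mu)$, $d(\mu)+z$ consistent between the two constructions (so that $\Psi\circ\Phi$ and $\Phi\circ\Psi$ really are the identity, not merely an automorphism) is the only place a sign-type error can creep in. Everything else — the Cuntz--Krieger relations, the self-similarity axioms, covariance — reduces termwise to identities already available in $\mathcal{O}_{G,\Lambda}$ or built into Definition~\ref{define skew product}, so those steps are routine once the multiplier-algebra framework is in place.
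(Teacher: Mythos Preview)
Your approach is essentially the same as the paper's: construct a covariant pair $(\pi,U)$ in $M(\mathcal{O}_{G,\Lambda\star\mathbb{Z}^k})$ to obtain $\pi\rtimes U$, then build the inverse via the universal property of $\mathcal{O}_{G,\Lambda\star\mathbb{Z}^k}$ using the spectral projections of $i_G$ (the paper writes this as the integral $\rho(t_{(\mu,z)})=\int\lambda^{d(\mu)+z}i_A(s_\mu)i_G(\lambda)\,d\lambda$), and then checks the two maps are mutually inverse on generators. The only slip is the sign in your unitary: with $w_\lambda=\sum_z\lambda^{z}q_z$ one computes $w_\lambda\,\pi(s_\mu)\,w_\lambda^*=\lambda^{-d(\mu)}\pi(s_\mu)$, so you need $w_\lambda=\sum_z\lambda^{-z}q_z$ (exactly the paper's $U_\lambda$) for the covariance relation $w_\lambda\pi(a)w_\lambda^*=\pi(\gamma_\lambda(a))$ to hold --- precisely the ``sign-type error'' you anticipated.
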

\begin{proof}
 Let$(i_A,i_G)$ be the universal covariant homomorphism of $(\mathcal{O}_{G,\Lambda},\mathbb{T}^k,\gamma)$ in $M(\mathcal{O}_{G,\Lambda} \rtimes_\gamma \mathbb{T}^k)$ (cf. \cite[Theorem~2.61]{Williams:Crossedproductsof07}).

For $\mu \in \Lambda,g \in G, \lambda \in \mathbb{T}^k$, define $\pi(s_\mu):=\sum_{z}t_{(\mu,z)} ,\pi(u_g):=\sum_{(v,z)}t_{(v,z)}v_g ,U_\lambda:=\sum_{(v,z)}\lambda^{-z}t_{(v,z)}$ ($\pi(s_\mu),\pi(u_g),U_\lambda$ lie in $M(\mathcal{O}_{G,\Lambda \star \mathbb{Z}^k})$ due to Lemma~\ref{operators in multiplier}). It is straightforward to check that there exists a homomorphism $\pi:\mathcal{O}_{G,\Lambda}^\dagger \to M(\mathcal{O}_{G,\Lambda \star \mathbb{Z}^k})$ which restricts to a nondegenerate homomorphism of $\mathcal{O}_{G,\Lambda}$. Since $\pi(\gamma_\lambda(s_\mu u_g s_\nu^*))=U_\lambda \pi(s_\mu u_g s_\nu^*) U_\lambda^*$ for all $\mu,\nu \in \Lambda,g \in G, \lambda \in \mathbb{T}^k$, we get a nondegenerate covariant homomorphism $(\pi,U)$ of $(\mathcal{O}_{G,\Lambda},\mathbb{T}^k,\gamma)$ in $M(\mathcal{O}_{G,\Lambda \star \mathbb{Z}^k})$. By the universal property of $\mathcal{O}_{G,\Lambda} \rtimes_\gamma \mathbb{T}^k$, there is a nondegenerate homomorphism $\pi \rtimes U:\mathcal{O}_{G,\Lambda} \rtimes_\gamma \mathbb{T}^k \to M(\mathcal{O}_{G,\Lambda \star \mathbb{Z}^k})$ (denote by $\overline{\pi \rtimes U}:M(\mathcal{O}_{G,\Lambda} \rtimes_\gamma \mathbb{T}^k) \to M(\mathcal{O}_{G,\Lambda \star \mathbb{Z}^k})$ the unique extension of $\pi \rtimes U$) such that $\overline{\pi \rtimes U} \circ i_A=\pi,\overline{\pi \rtimes U} \circ i_G=U$. Notice that the image of $\pi \rtimes U$ actually lies in $\mathcal{O}_{G,\Lambda \star \mathbb{Z}^k}$.

Conversely, for $\mu \in \Lambda,z \in \mathbb{Z}^k,g \in G$, define $\rho(t_{(\mu,z)}):=\int \lambda^{d(\mu)+z}i_A(s_\mu)i_G(\lambda) d\lambda,\rho(v_g):=\sum_{v}i_A(s_v u_g)$. It is straightforward to see that there exists a homomorphism $\rho:\mathcal{O}_{G,\Lambda \star \mathbb{Z}^k}^\dagger \to M(\mathcal{O}_{G,\Lambda} \rtimes_\gamma \mathbb{T}^k)$. Observe that $\rho(\mathcal{O}_{G,\Lambda \star \mathbb{Z}^k}) \subset \mathcal{O}_{G,\Lambda} \rtimes_\gamma \mathbb{T}^k$.

It is easy to see that $\rho \circ (\pi \rtimes U)=\id \vert_{ \mathcal{O}_{G,\Lambda} \rtimes_\gamma \mathbb{T}^k}$ and $(\pi \rtimes U) \circ \rho \vert_{\mathcal{O}_{G,\Lambda \star \mathbb{Z}^k}}=\id \vert_{\mathcal{O}_{G,\Lambda \star \mathbb{Z}^k}}$. Hence we are done.
\end{proof}

\begin{cor}\label{O_{G,Lambda star Z^k} ME O_{G,Lambda}}
Let $(G,\Lambda)$ be a self-similar $k$-graph. Then there exists a group homomorphism $\widehat{\gamma}:\mathbb{Z}^k \to \Aut(\mathcal{O}_{G,\Lambda \star \mathbb{Z}^k})$ such that
\begin{enumerate}
\item $\widehat{\gamma}_z(t_{(\mu,w-d(\mu))}v_g t_{(\nu,w-d(\nu))}^*)=t_{(\mu,z+w-d(\mu))}v_g t_{(\nu,z+w-d(\nu))}^*$ for all $\mu,\nu \in \Lambda,g \in G,z,w \in \mathbb{Z}^k$ with $s(\mu)=g \cdot s(\nu)$;
\item $\mathcal{O}_{G,\Lambda \star \mathbb{Z}^k} \rtimes_{\widehat{\gamma}} \mathbb{Z}^k$ is Morita equivalent to $\mathcal{O}_{G,\Lambda}$.
\end{enumerate}
\end{cor}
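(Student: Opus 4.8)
The plan is to derive this corollary directly from Theorem~\ref{O_{G,Lambda} rtimes T^k iso to O_{G,Lambda star Z^k}} together with Takai duality. First I would construct the action $\widehat{\gamma}$: under the isomorphism $\mathcal{O}_{G,\Lambda \star \mathbb{Z}^k} \cong \mathcal{O}_{G,\Lambda} \rtimes_\gamma \mathbb{T}^k$ established in the theorem, the dual action $\widehat{\gamma}$ of $\widehat{\mathbb{T}^k} = \mathbb{Z}^k$ on the crossed product transports to an action of $\mathbb{Z}^k$ on $\mathcal{O}_{G,\Lambda \star \mathbb{Z}^k}$. Concretely, the dual action scales the $\mathbb{T}^k$-variable in the crossed product, i.e.\ it multiplies $i_G(\lambda)$ by $\lambda^z$; chasing this through the isomorphism $\rho$ of the theorem (recall $\rho(t_{(\mu,w)}) = \int \lambda^{d(\mu)+w} i_A(s_\mu) i_G(\lambda)\, d\lambda$), one sees that the generator $t_{(\mu,w-d(\mu))}$ is sent to $t_{(\mu,z+w-d(\mu))}$, since the extra factor $\lambda^z$ inside the integral shifts the exponent of $\lambda$ by $z$ and hence shifts the $\mathbb{Z}^k$-coordinate. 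Because $v_g = \rho^{-1}$ of something fixed by the dual action (the $v_g$ come from $i_A(s_v u_g)$, which involves no $i_G(\lambda)$), $\widehat{\gamma}_z$ fixes $v_g$, giving formula~(1) on the spanning elements $t_{(\mu,w-d(\mu))} v_g t_{(\nu,w-d(\nu))}^*$; that it extends to a genuine automorphism and defines a group homomorphism $\mathbb{Z}^k \to \Aut(\mathcal{O}_{G,\Lambda \star \mathbb{Z}^k})$ is then immediate from the corresponding facts about the dual action.

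For part~(2), I would apply the Takai duality theorem to the system $(\mathcal{O}_{G,\Lambda}, \mathbb{T}^k, \gamma)$: it gives
\[
(\mathcal{O}_{G,\Lambda} \rtimes_\gamma \mathbb{T}^k) \rtimes_{\widehat{\gamma}} \mathbb{Z}^k \;\cong\; \mathcal{O}_{G,\Lambda} \otimes \mathcal{K}(\ell^2(\mathbb{Z}^k)),
\]
where I use that $\widehat{\mathbb{T}^k} \cong \mathbb{Z}^k$ (so the double dual group is again $\mathbb{T}^k$, and $\mathcal{K}(L^2(\mathbb{T}^k)) \cong \mathcal{K}(\ell^2(\mathbb{Z}^k))$ after Fourier transform). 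Substituting the isomorphism of Theorem~\ref{O_{G,Lambda} rtimes T^k iso to O_{G,Lambda star Z^k}} on the left-hand side and noting that $\widehat{\gamma}$ as just constructed is exactly the transported dual action, the left-hand side becomes $\mathcal{O}_{G,\Lambda \star \mathbb{Z}^k} \rtimes_{\widehat{\gamma}} \mathbb{Z}^k$, and the right-hand side is a stable form of $\mathcal{O}_{G,\Lambda}$; since a C*-algebra and its tensor product with the compacts on a separable Hilbert space are Morita equivalent, this proves~(2).

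The only genuinely non-routine point is verifying carefully that the abstractly-defined dual action $\widehat{\gamma}$ (which a priori only lives on the crossed product) corresponds under the concrete isomorphisms $\pi \rtimes U$ and $\rho$ to an action on $\mathcal{O}_{G,\Lambda \star \mathbb{Z}^k}$ satisfying the explicit formula~(1); this amounts to computing $\rho^{-1} \circ \widehat{\gamma}_z \circ \rho$ on generators and checking the bookkeeping of exponents of $\lambda$ in the integrals defining $\rho$, exactly as in \cite[Corollary~5.3]{KP00}. Everything else is a direct invocation of Takai duality and the standard fact that stabilization preserves Morita equivalence, so I expect the write-up to be short.
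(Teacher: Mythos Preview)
Your proposal is correct and takes essentially the same approach as the paper: the paper's proof simply says that part~(1) follows immediately from Theorem~\ref{O_{G,Lambda} rtimes T^k iso to O_{G,Lambda star Z^k}} (i.e.\ transport the dual $\widehat{\mathbb{T}^k}=\mathbb{Z}^k$-action through the isomorphism) and that part~(2) is Takai duality. Your write-up just makes explicit the bookkeeping with $\rho$ and the integral formulas that the paper leaves to the reader.
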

\begin{proof}
The first statement follows immediately from Theorem~\ref{O_{G,Lambda} rtimes T^k iso to O_{G,Lambda star Z^k}}. The second statement holds due to the Takai duality theorem.
\end{proof}

\section{K-theory of C*-Algebras of Products of Odometers}

\subsection{A conjecture on the K-theory of C*-algebras of products of odometers}

In this subsection, we pose a conjecture on the K-theory of the self-similar $k$-graph C*-algebra of a standard product of odometers.

\begin{prop}\label{O_Z,Lambda star Zk is AT}
Let $(\mathbb{Z},\Lambda)$ be a product of odometers. For $n \geq 1$, define
\begin{align*}
B_n:= \begin{cases}
    \overline{\spn}\{t_{(\mu,n\mathsf{1}_k-d(\mu))}v_g t_{(\nu,n\mathsf{1}_k-d(\nu))}^*\}, &\text{ if } k \neq \infty\\
    \overline{\spn}\{t_{(\mu,n\mathsf{1}_n-d(\mu))}v_g t_{(\nu,n\mathsf{1}_n-d(\nu))}^*\}, &\text{ if } k=\infty.
\end{cases}
\end{align*}
\begin{enumerate}
\item For $n \geq 1, B_n$ is a C*-subalgebra of $\mathcal{O}_{\mathbb{Z},\Lambda \star \mathbb{Z}^k}$.
%\item For $n,m \geq 1,B_{n,m}$ is a unital C*-subalgebra of $B_n$.
\item For $n \geq 1, B_n \cong C(\mathbb{T}) \otimes K(l^2(\Lambda))$.%, V^g \otimes e_{\mu,\nu} \mapsto t_{(\mu,n\mathsf{1}-d(\mu))} v_g t_{(\nu,n\mathsf{1}-d(\nu))}^*$ for all $g \in \mathbb{Z},\mu,\nu \in \Lambda$.
\item\label{O_G,Lambda star Z^k =lim A_n} $\{B_n\}_{n=1}^{\infty}$ is an increasing sequence and $\mathcal{O}_{\mathbb{Z},\Lambda \star \mathbb{Z}^k}=\overline{\bigcup_{n \geq 1 }B_n}$.
\item $K_0(\mathcal{O}_{\mathbb{Z},\Lambda \star \mathbb{Z}^k}) \cong \{\frac{z}{\prod_{i=1}^{k}n_i^{p_i} }:z \in \mathbb{Z},p_1,\dots,p_k \in \mathbb{N},\sum_{i=1}^{k}p_i<\infty \},K_1(\mathcal{O}_{\mathbb{Z},\Lambda \star \mathbb{Z}^k}) \cong \mathbb{Z}$.
\item The homomorphism $\widehat{\gamma}$ in Corollary~\ref{O_{G,Lambda star Z^k} ME O_{G,Lambda}} induces an action of $\mathbb{Z}^k$ on $K_0(\mathcal{O}_{\mathbb{Z},\Lambda \star \mathbb{Z}^k})$ such that $z \cdot w /(n_1\cdots n_k)^{n}=w/((n_1\cdots n_k)^{n}n_1^{z_1}\cdots n_k^{z_k})$ for all $z \in \mathbb{Z}^k,w \in \mathbb{Z},n \geq 1$, and induces a trivial action $\mathbb{Z}^k$ of on $K_1(\mathcal{O}_{\mathbb{Z},\Lambda \star \mathbb{Z}^k})$.
\end{enumerate}
\end{prop}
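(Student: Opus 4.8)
The plan is to establish the five parts in order, since each feeds into the next. For part (1), I would use the defining relations of $\mathcal{O}_{\mathbb{Z},\Lambda\star\mathbb{Z}^k}^\dagger$ together with the product-of-odometers structure to show that the indicated spanning set is closed under multiplication and adjoint. The key computation is that $\bigl(t_{(\mu,n\mathsf{1}-d(\mu))}v_g t_{(\nu,n\mathsf{1}-d(\nu))}^*\bigr)\bigl(t_{(\alpha,n\mathsf{1}-d(\alpha))}v_h t_{(\beta,n\mathsf{1}-d(\beta))}^*\bigr)$ reduces, via $t_{(\nu,\cdot)}^* t_{(\alpha,\cdot)}$ and the factorization property of $\Lambda$ (which lets one match $\nu$ against $\alpha$ after applying a group element), to another element of the same form; the fact that the second coordinate is pinned at $n\mathsf{1}$ across all of $\Lambda$ is exactly what keeps us inside $B_n$. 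For part (2), the fixed second coordinate kills the $\mathbb{Z}^k$-worth of vertices into a single "block," so one expects the $\{t_{(\mu,n\mathsf{1}-d(\mu))}t_{(\nu,n\mathsf{1}-d(\nu))}^*\}$ to give matrix units indexed by $\Lambda\times\Lambda$ (hence $K(\ell^2(\Lambda))$) while the unitaries $v_g$ with $s(\mu)=g\cdot s(\nu)=g\cdot v$, i.e. $g$ ranging over $\mathbb{Z}$ with $\Lambda^0=\{v\}$, contribute a copy of $C^*(\mathbb{Z})\cong C(\mathbb{T})$; I would make this precise by writing down the explicit isomorphism on generators and checking the Cuntz--Krieger relations force it to be well-defined and injective via a gauge-invariant uniqueness argument.

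For part (3), monotonicity $B_n\subset B_{n+1}$ comes from rewriting $t_{(\mu,n\mathsf{1}-d(\mu))}v_g t_{(\nu,n\mathsf{1}-d(\nu))}^*$ by inserting $\sum_{\lambda\in s(\mu)\Lambda^{\mathsf{1}}} t_{(\lambda,\cdot)}t_{(\lambda,\cdot)}^*$ on the right (and the analogue on the left), using the Cuntz--Krieger relation (4) of Definition~\ref{D:ssfamily}; pushing the degree up by $\mathsf{1}$ lands the result in $B_{n+1}$, and here one uses the self-similarity relation $u_g s_\mu = s_{g\cdot\mu}u_{g|_\mu}$ to move $v_g$ past the inserted partial isometries. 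Density of $\bigcup_n B_n$ then follows from the description of $\mathcal{O}_{\mathbb{Z},\Lambda\star\mathbb{Z}^k}$ as the closed span of the $t_{(\mu,z-d(\mu))}v_g t_{(\nu,z-d(\nu))}^*$: any such element with general $z\in\mathbb{Z}^k$ can, after applying enough of the same partial-isometry insertions, be written with both second coordinates of the form $n\mathsf{1}-d(\cdot)$ for $n$ large. Once (1)--(3) hold, part (4) is a direct limit computation: $K_0$ of each $B_n\cong C(\mathbb{T})\otimes K$ is $\mathbb{Z}$, the connecting map $K_0(B_n)\to K_0(B_{n+1})$ is multiplication by $\prod_i n_i$ (the number of edges added in each direction, read off from the insertion in part (3)), so $K_0$ of the limit is $\varinjlim(\mathbb{Z}\xrightarrow{\prod n_i}\mathbb{Z})\cong\mathbb{Z}[1/(n_1\cdots n_k)]$, which is precisely the stated group of fractions; $K_1$ of each $B_n$ is $\mathbb{Z}$ with connecting maps the identity (the $C(\mathbb{T})$ factor is untouched by the insertions), giving $K_1\cong\mathbb{Z}$. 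When $k=\infty$ one takes a further inductive limit over the truncations $\mathsf{1}_n$ and the same bookkeeping applies.

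For part (5), I would track the action $\widehat{\gamma}$ of Corollary~\ref{O_{G,Lambda star Z^k} ME O_{G,Lambda}} through the isomorphisms above: $\widehat{\gamma}_z$ shifts the second coordinate by $z$, so it does not preserve $B_n$ on the nose, but $\widehat{\gamma}_z(B_n)\subset B_{n+m}$ for $m$ large, and on the level of $K_0$, shifting by $e_i$ corresponds to multiplying the generator of $K_0(B_n)$ by $n_i$ before reidentifying inside the limit — equivalently dividing the fraction $w/(n_1\cdots n_k)^n$ by $n_i$, which is exactly the formula $z\cdot w/(n_1\cdots n_k)^n = w/((n_1\cdots n_k)^n n_1^{z_1}\cdots n_k^{z_k})$. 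On $K_1$, since the $C(\mathbb{T})$ factor is the group C*-algebra of $\mathbb{Z}=G$ and $\widehat{\gamma}$ fixes the $v_g$'s (it only moves the $\Lambda\star\mathbb{Z}^k$-coordinate), the induced action on $K_1$ is trivial. The main obstacle I anticipate is part (3), specifically verifying carefully that the partial-isometry insertions used to prove both $B_n\subset B_{n+1}$ and density interact correctly with the self-similar relation $u_g s_\mu=s_{g\cdot\mu}u_{g|_\mu}$ — one must check that after moving $v_g$ past the inserted terms the group element that remains still satisfies the source-matching constraint $s(\cdot)=g'\cdot s(\cdot)$, and that the standard-product commutation relations are not actually needed here (they are needed elsewhere in the paper but plausibly not for this proposition). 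A secondary technical point is keeping the $k=\infty$ case honest, where $\mathsf{1}_k$ is replaced by $\mathsf{1}_n$ and one is taking an iterated rather than single direct limit.
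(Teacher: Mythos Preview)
Your overall plan matches the paper's approach closely: parts (1)--(3) are done just as you describe, and the $K_0$ computation in (4) and (5) is exactly the direct-limit argument you sketch. There is, however, one genuine gap.

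Your assertion that ``the $C(\mathbb{T})$ factor is untouched by the insertions'' in the $K_1$ computation is false, and this is precisely the subtle point in the proof. When you insert $\sum_{\alpha\in\Lambda^{\mathsf{1}_k}} t_{(\alpha,n\mathsf{1}_k)}t_{(\alpha,n\mathsf{1}_k)}^*$ and push $v_1$ across using the self-similar relation $v_g t_{(\alpha,\cdot)} = t_{(g\cdot\alpha,\cdot)} v_{g|_\alpha}$, the group element changes: the generating unitary $t_{(v,n\mathsf{1}_k)} v_1 t_{(v,n\mathsf{1}_k)}^*$ of $K_1(B_n)$ maps to $\sum_{\alpha\in\Lambda^{\mathsf{1}_k}} t_{(1\cdot\alpha,\cdot)} v_{1|_\alpha} t_{(\alpha,\cdot)}^*$ in $B_{n+1}$. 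Under the identification $B_{n+1}\cong C(\mathbb{T})\otimes K(\ell^2(\Lambda))$ this is a permutation-type matrix with entries $V^{1|_\alpha}$, not the diagonal unitary $V\otimes e_{v,v}$. Showing that its $K_1$-class is still the generator requires a homotopy argument together with the odometer-specific fact that $1|_\alpha = 1$ for exactly one $\alpha\in\Lambda^{\mathsf{1}_k}$ and $1|_\alpha = 0$ for all others; the paper carries this out by multiplying by a diagonal unitary to reduce to a scalar permutation matrix, then counting $\sum_\alpha 1|_\alpha = 1$. The same issue recurs in part (5): $\widehat{\gamma}$ does fix $v_g$, but tracking the $K_1$ generator through the shift and back into some $B_m$ again forces $v_1$ through the self-similar relation, and triviality of the induced $K_1$-action needs the same computation, not merely the observation that $\widehat{\gamma}$ fixes $v_g$.

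Two smaller remarks. In part (1) the product is simpler than you suggest: because every $(\mu, n\mathsf{1}_k - d(\mu))$ has the \emph{same} source $(v,n\mathsf{1}_k)$ in the skew product, one gets $t_{(\nu,\cdot)}^* t_{(\alpha,\cdot)} = \delta_{\nu,\alpha}\, t_{(v,n\mathsf{1}_k)}$ directly, with no factorization or group element needed. In part (2) your ``gauge-invariant uniqueness'' for injectivity is vague; the paper instead builds the map $C(\mathbb{T})\otimes K(\ell^2(\Lambda))\to B_n$ from commuting homomorphisms and proves injectivity using a pair of faithful conditional expectations intertwined by the map.
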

\begin{proof}
The proof of this proposition is similar for both the cases $k<\infty$ and $k=\infty$. So we only prove the proposition for $k<\infty$ and omit the proof of the other case. We assume that $k<\infty$.

For $n \geq 1,\mu,\nu,\alpha,\beta \in \Lambda,g,h \in \mathbb{Z}$, we have
\begin{align*}
&t_{(\mu,n\mathsf{1}_k-d(\mu))}v_g t_{(\nu,n\mathsf{1}_k-d(\nu))}^*t_{(\alpha,n\mathsf{1}_k-d(\alpha))}v_h t_{(\beta,n\mathsf{1}_k-d(\beta))}^*
= \delta_{\nu,\alpha}t_{(\mu,n\mathsf{1}_k-d(\mu))}v_{gh} t_{(\beta,n\mathsf{1}_k-d(\beta))}^*.
\end{align*}
So $B_n$ is a C*-subalgebra of $\mathcal{O}_{\mathbb{Z},\Lambda \star \mathbb{Z}^k}$.%, and for $m \geq 1,B_{n,m}$ is a unital C*-subalgebra of $B_n$ with the unit $\sum_{\mu \in \Lambda^{\leq m\mathsf{1}}}t_{(\mu,n\mathsf{1}-d(\mu))}t_{(\mu,n\mathsf{1}-d(\mu))}^*$.

For $n \geq 1$, by Lemma~\ref{operators in multiplier}, $\sum_{\mu \in \Lambda}t_{(\mu,n\mathsf{1}_k-d(\mu))}v_g t_{(\mu,n\mathsf{1}_k-d(\mu))}^*$ is a unitary of $M(B_n)$ for all $g \in \mathbb{Z}$. Denote by $V:\mathbb{T} \to \mathbb{C},\lambda \mapsto \lambda$ the generating unitary of $C(\mathbb{T})$. Then there exists a homomorphism $\varphi:C(\mathbb{T}) \to M(B_n)$ such that $\varphi(V^g)= \sum_{\mu \in \Lambda}t_{(\mu,n\mathsf{1}_k-d(\mu))}v_g t_{(\mu,n\mathsf{1}_k-d(\mu))}^*$ for all $g \in \mathbb{Z}$. Denote by $\{e_{\mu,\nu}\}_{\mu,\nu \in \Lambda}$ the generators of $K(l^2(\Lambda))$. For $\mu,\nu \in \Lambda$, define $E_{\mu,\nu}:=t_{(\mu,n\mathsf{1}_k-d(\mu))}t_{(\nu,n\mathsf{1}_k-d(\nu))}^*$. For $\mu,\nu,\alpha,\beta \in \Lambda$, we have $E_{\mu,\nu}E_{\alpha,\beta}=\delta_{\nu,\alpha}E_{\mu,\beta}$. So there exists a homomorphism $\psi:K(l^2(\Lambda)) \to B_n$ such that $\psi(e_{\mu,\nu})=E_{\mu,\nu}$ for all $\mu,\nu \in \Lambda$. Since the images of $\varphi,\psi$ commute, by \cite[Theorem~6.3.7]{MurphyCalgebra} there exists a surjective homomorphism $h:C(\mathbb{T}) \otimes K(l^2(\Lambda))\to B_n$ such that $h(V^g \otimes e_{\mu,\nu})= t_{(\mu,n\mathsf{1}_k-d(\mu))} v_g t_{(\nu,n\mathsf{1}_k-d(\nu))}^*$ for all $g \in \mathbb{Z},\mu,\nu \in \Lambda$. Notice that there exist two faithful expectations $E:C(\mathbb{T}) \otimes K(l^2(\Lambda)) \to C(\mathbb{T}) \otimes K(l^2(\Lambda)),F:B_n \to B_n$ (the existence of $F$ follows from \cite[Theorem~3.20]{LY19_4}) such that $E(V^g \otimes e_{\mu,\nu})=\delta_{g,0}\delta_{\mu,\nu}1_{C(\mathbb{T})}\otimes e_{\mu,\mu},F(t_{(\mu,n\mathsf{1}_k-d(\mu))} v_g t_{(\nu,n\mathsf{1}_k-d(\nu))}^*)=\delta_{g,0}\delta_{\mu,\nu}t_{(\mu,n\mathsf{1}_k-d(\mu))} t_{(\mu,n\mathsf{1}_k-d(\mu))}^*$. It is easy to check that $h \circ E=F \circ h$ and $h$ is injective on the image of $E$. Hence $h$ is an isomorphism by \cite[Proposition~3.11]{Kat03}. Therefore $B_n \cong C(\mathbb{T}) \otimes K(l^2(\Lambda))$.

For $n \geq 1,\mu,\nu \in \Lambda,g \in \mathbb{Z}$, we have
\begin{align*}
t_{(\mu,n\mathsf{1}_k-d(\mu))}v_g t_{(\nu,n\mathsf{1}_k-d(\nu))}^*&=\sum_{\alpha \in \Lambda^{\mathsf{1}_k}}t_{(\mu,n\mathsf{1}_k-d(\mu))}v_g t_{(\alpha,n\mathsf{1}_k)}t_{(\alpha,n\mathsf{1}_k)}^* t_{(\nu,n\mathsf{1}_k-d(\nu))}^*
\\&=\sum_{\alpha \in \Lambda^{\mathsf{1}_k}}t_{(\mu(g \cdot \alpha),n\mathsf{1}_k-d(\mu))}v_{g \vert_\alpha} t_{(\nu\alpha,n\mathsf{1}_k-d(\nu))}^*
\\&=\sum_{\alpha \in \Lambda^{\mathsf{1}_k}}t_{(\mu(g \cdot \alpha),(n+1)\mathsf{1}_k-d(\mu(g \cdot \alpha)))}v_{g \vert_\alpha} t_{(\nu\alpha,(n+1)\mathsf{1}_k-d(\nu\alpha))}^*.
\end{align*}
So $B_n \subset B_{n+1}$.

For $\mu,\nu \in \Lambda,w \in \mathbb{Z}^k,g \in \mathbb{Z}$, pick up an arbitrary $n \geq 1$ such that $w \leq n \mathsf{1}_k$, we have
\begin{align*}
t_{(\mu,w-d(\mu))}v_g t_{(\nu,w-d(\nu))}^*&=\sum_{\alpha \in \Lambda^{n\mathsf{1}_k-w}}t_{(\mu,w-d(\mu))}v_gt_{(\alpha,w)}t_{(\alpha,w)}^* t_{(\nu,w-d(\nu))}^*
\\&=\sum_{\alpha \in \Lambda^{n\mathsf{1}_k-w}}t_{(\mu(g \cdot \alpha),w-d(\mu))}v_{g \vert_\alpha} t_{(\nu\alpha,w-d(\nu))}^*
\\&=\sum_{\alpha \in \Lambda^{n\mathsf{1}_k-w}}t_{(\mu(g \cdot \alpha),n\mathsf{1}_k-d(\mu(g \cdot \alpha)))}v_{g \vert_\alpha} t_{(\nu\alpha,n\mathsf{1}_k-d(\nu\alpha))}^*.
\end{align*}
Hence $\mathcal{O}_{\mathbb{Z},\Lambda \star \mathbb{Z}^k}=\overline{\bigcup_{n \geq 1 }B_n}$.

For $n \geq 1$, denote by $\iota_n:B_n \to B_{n+1}$ the inclusion map. we compute that $t_{(v,n\mathsf{1}_k)}=\sum_{\mu \in \Lambda^{\mathsf{1}_k}}t_{(\mu,(n+1)\mathsf{1}_k-\mathsf{1}_k)}t_{(\mu,(n+1)\mathsf{1}_k-\mathsf{1}_k)}^*$. So $K_0(\iota_n)(1)=n_1 \cdots n_k$. Hence $K_0(\mathcal{O}_{\mathbb{Z},\Lambda \star \mathbb{Z}^k})\cong\{$ $\frac{z}{\prod_{i=1}^{k}n_i^{p_i} }:z \in \mathbb{Z},p_1,\dots,p_k \in \mathbb{N}\}$. We calculate that $\iota_n(t_{(v,n\mathsf{1}_k)}v_1 t_{(v,n\mathsf{1}_k)}^*)=\sum_{\alpha \in \Lambda^{\mathsf{1}_k}}$ $t_{(1\cdot\alpha,(n+1)\mathsf{1}_k-\mathsf{1}_k)}v_{1 \vert_\alpha} t_{(\alpha,(n+1)\mathsf{1}_k-\mathsf{1}_k)}^*$, which is regarded as a unitary of $M_{\Lambda^{\mathsf{1}_k}} \otimes C(\mathbb{T})$. Consider another unitary $\sum_{\beta \in \Lambda^{\mathsf{1}_k}}t_{(\beta,(n+1)\mathsf{1}_k-\mathsf{1}_k)}v_{-(1 \vert_\beta)} t_{(\beta,(n+1)\mathsf{1}_k-\mathsf{1}_k)}^*$ of $M_{\Lambda^{\mathsf{1}_k}} \otimes C(\mathbb{T})$. We compute that
\begin{align*}
&\sum_{\alpha \in \Lambda^{\mathsf{1}_k}}t_{(1\cdot\alpha,(n+1)\mathsf{1}_k-\mathsf{1}_k)}v_{1 \vert_\alpha} t_{(\alpha,(n+1)\mathsf{1}_k-\mathsf{1}_k)}^* \sum_{\beta \in \Lambda^{\mathsf{1}_k}}t_{(\beta,(n+1)\mathsf{1}_k-\mathsf{1}_k)}v_{-(1 \vert_\beta)} t_{(\beta,(n+1)\mathsf{1}_k-\mathsf{1}_k)}^*
\\&=\sum_{\alpha \in \Lambda^{\mathsf{1}_k}}t_{(1\cdot\alpha,(n+1)\mathsf{1}_k-\mathsf{1}_k)} t_{(\alpha,(n+1)\mathsf{1}_k-\mathsf{1}_k)}^*
\\&\sim_h \sum_{\alpha \in \Lambda^{\mathsf{1}_k}}t_{(\alpha,(n+1)\mathsf{1}_k-\mathsf{1}_k)} t_{(\alpha,(n+1)\mathsf{1}_k-\mathsf{1}_k)}^*.
\end{align*}
So $[\sum_{\alpha \in \Lambda^{\mathsf{1}_k}}t_{(1\cdot\alpha,(n+1)\mathsf{1}_k-\mathsf{1}_k)}v_{1 \vert_\alpha} t_{(\alpha,(n+1)\mathsf{1}_k-\mathsf{1}_k)}^*] =[\sum_{\beta \in \Lambda^{\mathsf{1}_k}}t_{(\beta,(n+1)\mathsf{1}_k-\mathsf{1}_k)}v_{1 \vert_\beta}t_{(\beta,(n+1)\mathsf{1}_k-\mathsf{1}_k)}^*]$ in $K_1(M_{\Lambda^{\mathsf{1}_k}} \otimes C(\mathbb{T}))$. Since $1 \vert_{x_{\fs_1}^1 \cdots x_{\fs_k}^k}=\delta_{\fs_1,n_1-1}\cdots\delta_{\fs_k,n_k-1}1, K_1(\iota_n)=\id$. Hence $K_1(\mathcal{O}_{\mathbb{Z},\Lambda \star \mathbb{Z}^k})$ $\cong \mathbb{Z}$.

Since $\widehat{\gamma}$ is a group homomorphism from $\mathbb{Z}^k$ to $\Aut(\mathcal{O}_{G,\Lambda \star \mathbb{Z}^k}), \widehat{\gamma}$ induces actions of $\mathbb{Z}^k$ on $K_0(\mathcal{O}_{\mathbb{Z},\Lambda \star \mathbb{Z}^k})$ and $K_1(\mathcal{O}_{\mathbb{Z},\Lambda \star \mathbb{Z}^k})$. For $z \in \mathbb{Z}^k,n \geq 1$, there exists $m \geq 1$ such that $z \leq m \mathsf{1}_k$. Then $\widehat{\gamma}_{z}(t_{(v,n\mathsf{1}_k)})=t_{(v,n\mathsf{1}_k+z)}=\sum_{\alpha \in \Lambda^{m \mathsf{1}_k-z}}t_{(\alpha,(m+n)\mathsf{1}_k-d(\alpha))}t_{(\alpha,(m+n)\mathsf{1}_k-d(\alpha))}^*$. So $K_0(\widehat{\gamma}_{z})(1/(n_1\cdots n_k)^n)=1/((n_1\cdots n_k)^n n_1^{z_1}\cdots n_k^{z_k})$. Moreover, we calculate that
\begin{align*}
\widehat{\gamma}_{z}(v_1 t_{(v,\mathsf{1}_k)})&=v_1  t_{(v,\mathsf{1}_k+z)}
\\&=v_1 \sum_{\alpha \in \Lambda^{m \mathsf{1}_k-z}}t_{(\alpha,(m+1)\mathsf{1}_k-d(\alpha))}t_{(\alpha,(m+1)\mathsf{1}_k-d(\alpha))}^*
\\&=\sum_{\alpha \in \Lambda^{m \mathsf{1}_k-z}}t_{(1 \cdot \alpha,(m+1)\mathsf{1}_k-d(\alpha))} v_{1 \vert_\alpha}t_{(\alpha,(m+1)\mathsf{1}_k-d(\alpha))}^*
\end{align*}
So $K_1(\widehat{\gamma}_{z})$ is the identity map.
\end{proof}

\begin{conj}\label{decompose K(O_G,Lambda)}
Let $(\mathbb{Z},\Lambda)$ be a product of odometers with $k \geq 2$. Denote by $e:=(\delta_{1,i}+\mathbb{Z}/g\Lambda\mathbb{Z})_{1 \leq i \leq k} \in  (\mathbb{Z}/g\Lambda\mathbb{Z})^{2^{k-2}}$. Then
\[
(K_0(\mathcal{O}_{\mathbb{Z},\Lambda}),[1_{\mathcal{O}_{\mathbb{Z},\Lambda}}], K_1(\mathcal{O}_{\mathbb{Z},\Lambda}) ) \cong (\mathbb{Z}^{2^{k-1}}\oplus (\mathbb{Z}/g\Lambda\mathbb{Z})^{2^{k-2}},(0,e),\mathbb{Z}^{2^{k-1}}\oplus (\mathbb{Z}/g\Lambda\mathbb{Z})^{2^{k-2}}).
\]
\end{conj}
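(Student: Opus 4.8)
The plan is to compute $(K_0(\mathcal{O}_{\mathbb{Z},\Lambda}),[1],K_1(\mathcal{O}_{\mathbb{Z},\Lambda}))$ by running the Pimsner--Voiculescu / Takai-duality machinery set up in Section~3, using the concrete $\mathbb{Z}^k$-action on $K_*(\mathcal{O}_{\mathbb{Z},\Lambda\star\mathbb{Z}^k})$ recorded in Proposition~\ref{O_Z,Lambda star Zk is AT}. Since Corollary~\ref{O_{G,Lambda star Z^k} ME O_{G,Lambda}} gives $\mathcal{O}_{\mathbb{Z},\Lambda\star\mathbb{Z}^k}\rtimes_{\widehat\gamma}\mathbb{Z}^k$ Morita equivalent to $\mathcal{O}_{\mathbb{Z},\Lambda}$, and Morita equivalence preserves $K$-theory together with the class of a full projection, it suffices to iterate the Pimsner--Voiculescu exact sequence $k$ times, peeling off one copy of $\mathbb{Z}$ at a time, starting from $A_0:=\mathcal{O}_{\mathbb{Z},\Lambda\star\mathbb{Z}^k}$ with $K_0(A_0)\cong\mathbb{Z}[1/(n_1\cdots n_k)]$ (the ring of fractions with denominators dividing powers of $n_1\cdots n_k$, concretely the group in part~(4)) and $K_1(A_0)\cong\mathbb{Z}$, and with the $i$-th generator of $\mathbb{Z}^k$ acting on $K_0$ by multiplication by $1/n_i$ and trivially on $K_1$.

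First I would record the single-step computation: if $A$ is a $C^*$-algebra with a $\mathbb{Z}$-action $\alpha$ such that $\alpha_*-\id$ is injective on $K_0(A)$ with cokernel $C_0$ and injective on $K_1(A)$ with cokernel $C_1$, then the PV sequence degenerates into $0\to C_0\to K_0(A\rtimes_\alpha\mathbb{Z})\to \ker(\alpha_*-\id\mid K_1)\to 0$ and similarly for $K_1$; in our setting multiplication by $1/n_i-1$ on $\mathbb{Z}[1/(n_1\cdots n_k)]$ is injective with cokernel $\mathbb{Z}/(n_i-1)\mathbb{Z}$ tensored with the remaining localization, and one must track how the localization at the other $n_j$'s interacts with reducing mod $n_i-1$. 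The key arithmetic input is that $n_j$ is invertible modulo $n_i-1$ precisely when $\gcd(n_j,n_i-1)=1$; more relevantly, after killing $n_i-1$ the surviving torsion is controlled by $g\Lambda=\gcd\{n_i-1\}$, so that crossing with all $k$ generators produces, by an inclusion-exclusion bookkeeping over subsets of $\{1,\dots,k\}$, a free part of rank $2^{k-1}$ in each of $K_0$ and $K_1$ and a torsion part $(\mathbb{Z}/g\Lambda\mathbb{Z})^{2^{k-2}}$ in each; the $2^{k-1}$ and $2^{k-2}$ are the numbers of subsets of $\{1,\dots,k\}$ of even, resp.\ a fixed, parity once one accounts for which PV connecting maps vanish. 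I would organize this as an induction on $k$, computing $(K_0,K_1)$ of $A_0\rtimes\mathbb{Z}^j$ for $j=1,\dots,k$, keeping the localization-at-$\{n_{j+1},\dots,n_k\}$ bookkeeping explicit so the torsion that finally appears is genuinely $g\Lambda$-torsion and not some larger modulus.

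Next I would pin down the position class. The unit of $\mathcal{O}_{\mathbb{Z},\Lambda}$ corresponds, under the Morita equivalence of Corollary~\ref{O_{G,Lambda star Z^k} ME O_{G,Lambda}}, to a specific full projection in $\mathcal{O}_{\mathbb{Z},\Lambda\star\mathbb{Z}^k}\rtimes_{\widehat\gamma}\mathbb{Z}^k$; chasing $[1_{\mathcal{O}_{\mathbb{Z},\Lambda\star\mathbb{Z}^k}}]\in K_0(A_0)$, which by part~(4) is the element $1$, through the $k$ PV boundary maps shows its image lands in the torsion summand and equals $e=(\delta_{1,i}+\mathbb{Z}/g\Lambda\mathbb{Z})_i$ in the chosen coordinates — this is just the statement that the unit survives each quotient map $K_0(A)\to C_0$ and then gets identified with the generator coming from the first circle factor of $B_n\cong C(\mathbb{T})\otimes K(\ell^2(\Lambda))$. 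The main obstacle is the combinatorial/arithmetic heart of the second paragraph: showing that iterating PV really yields exactly $\mathbb{Z}^{2^{k-1}}\oplus(\mathbb{Z}/g\Lambda\mathbb{Z})^{2^{k-2}}$ and not, say, a direct sum of various $\mathbb{Z}/(n_i-1)\mathbb{Z}$'s or a group with nontrivial extension problems. This requires (i) verifying that every PV extension that appears splits — plausibly because at each stage $K_1$ is free or because one can exhibit explicit splitting homomorphisms coming from corner embeddings — and (ii) a careful inclusion-exclusion/Künneth-type count reconciling the localized-$\mathbb{Z}$ modules with the final bigraded answer; I would do the $k=2$ and $k=3$ cases by hand first to fix the indexing conventions (which is presumably why the conjecture is stated for $k\ge 2$) before attempting the general inductive step, and I expect the honest difficulty of proving the conjecture — as opposed to this proposal — to reside precisely in (i), the splitting, which is exactly the $k$-graph-independence phenomenon the introduction flags as open.
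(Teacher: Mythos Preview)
The statement you are attempting to prove is a \emph{conjecture}: the paper does not prove it, and the accompanying Remark explicitly says why. So there is no ``paper's own proof'' to compare against; what the paper offers is a partial analysis via the Kasparov/Savinien--Bellissard/Schochet spectral sequence for the $\mathbb{Z}^k$-action $\widehat\gamma$ on $\mathcal{O}_{\mathbb{Z},\Lambda\star\mathbb{Z}^k}$, computing the $E_2$-page (this uses Proposition~\ref{O_Z,Lambda star Zk is AT} and \cite[Proposition~6.12]{MR3784245}) and then stopping, because ``the determination of the $k{+}1$-th page \dots\ is not easy'' and ``there is another extension obstruction''.

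Your iterated Pimsner--Voiculescu scheme is not a different route: the cohomological spectral sequence for a $\mathbb{Z}^k$-crossed product \emph{is} the bookkeeping device for iterating PV $k$ times, so your inductive computation of $K_*(A_0\rtimes\mathbb{Z}^j)$ is exactly the paper's spectral-sequence computation rephrased. The ``inclusion--exclusion/K\"unneth-type count'' you propose is what produces the $E_2$-page the paper already records, namely $(\mathbb{Z}/g\Lambda\mathbb{Z})^{\binom{k-1}{p-1}}$ on even rows and $\mathbb{Z}^{\binom{k}{p}}$ on odd rows; summing over $p$ of the correct parity gives the conjectured groups. Your step~(ii) is therefore essentially done in the paper. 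The genuine gap is your step~(i): that every PV extension splits (equivalently, that the higher differentials $d_r$, $r\ge 2$, vanish and the filtration-extension problems are trivial). You flag this yourself, and it is precisely the obstruction the paper identifies and leaves open --- indeed, the introduction ties it to the Barlak--Omland--Stammeier conjecture and to factorization-independence for single-vertex $k$-graph $C^*$-algebras. Your heuristic for splitting (``plausibly because at each stage $K_1$ is free'') is not sufficient: after the first PV step torsion already enters $K_0$, and at subsequent steps the short exact sequences have torsion on one side, so freeness of $K_1$ alone does not force splitting. In short, your proposal recovers the paper's partial analysis and correctly isolates the open point, but it does not close it; nothing here goes beyond what the paper's Remark already establishes.
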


\begin{rmk}
We first notice that the C*-dynamical systems $(\mathcal{O}_{\mathbb{Z},\Lambda \star \mathbb{Z}^k},\mathbb{Z},\widehat{\gamma})$ and $(\mathcal{O}_{\mathbb{Z},\Lambda \star \mathbb{Z}^k},\mathbb{Z}$, $\widehat{\gamma}^{-1})$ yield the same crossed product C*-algebra. By Corollary~\ref{O_{G,Lambda star Z^k} ME O_{G,Lambda}}, $K_*(\mathcal{O}_{\mathbb{Z},\Lambda})\cong K_*(\mathcal{O}_{\mathbb{Z},\Lambda \star \mathbb{Z}^k}$ $\rtimes_{\widehat{\gamma}} \mathbb{Z}^k) \cong K_*(\mathcal{O}_{\mathbb{Z},\Lambda \star \mathbb{Z}^k} \rtimes_{\widehat{\gamma}^{-1}} \mathbb{Z}^k)$. For $k=1$, the K-theory of the C*-algebra $\mathcal{O}_{\mathbb{Z},\Lambda}$ was studied in numerous papers such as \cite{MR3089668, MR1896827, Kat08_1, MR2873845}.

Now we assume that $k \geq 2$. It is easier to discuss the K-theory of $\mathcal{O}_{\mathbb{Z},\Lambda \star \mathbb{Z}^k} \rtimes_{\widehat{\gamma}^{-1}} \mathbb{Z}^k$ and our discussion is based on the combination of previous profound work (\cite[Corollary~2.5]{Bar15}, \cite[6.10]{MR918241}, \cite[Theorem~2]{MR2505326}, \cite{Schochet:PJM81}) give rise to a cohomology spectral sequence (see \cite{MR1269324}) $\{E_r^{p,q}\}_{r \geq 1,p,q \in \mathbb{Z}}$, where for $p,q \in \mathbb{Z}$, we have $E_1^{p,q}=K_q(\mathcal{O}_{\mathbb{Z},\Lambda \star \mathbb{Z}^k} ) \otimes_{\mathbb{Z}} \bigwedge^p(\mathbb{Z}^k)$, the differential map is $d_1^{p,q}:E_1^{p,q} \to E_1^{p+1,q}, g \otimes e \mapsto \sum_{i=1}^{k}(e_i \cdot g-g) \otimes (e \wedge e_i)$ for all $g \in K_q(\mathcal{O}_{\mathbb{Z},\Lambda \star \mathbb{Z}^k} ),e \in \bigwedge^p(\mathbb{Z}^k)$. When $q$ is even, following the algorithm from \cite[Proposition~6.12]{MR3784245}, $E_2^{p,q}=\begin{cases}
    (\mathbb{Z}/g_\Lambda \mathbb{Z})^{C_{k-1}^{p-1}} &\text{ if }1\leq p \leq k\\
    0 &\text{ if } p<1  \text{ or } p >k
\end{cases}$. On the other hand, when $q$ is odd, since the action on $K_1(\mathcal{O}_{\mathbb{Z},\Lambda \star \mathbb{Z}^k} )$ is trivial due to Proposition~\ref{O_Z,Lambda star Zk is AT}, we can easily deduce that the connecting maps $d_1^{p,q}$ are all zero, hence $E_2^{p,q}=\begin{cases}
    \mathbb{Z}^{C_{k}^{p}} &\text{ if }0\leq p \leq k\\
    0 &\text{ if } p<0  \text{ or } p >k
\end{cases}$. The determination of the $k+1$-th page of the spectral sequence is not easy, even though one can achieve this, there is another extension obstruction to characterize the K-theory of $\mathcal{O}_{\mathbb{Z},\Lambda \star \mathbb{Z}^k} \rtimes_{\widehat{\gamma}^{-1}} \mathbb{Z}^k$.
\end{rmk}

\subsection{Generalized $\mathcal{Q}_S$ and a conjecture of Barlak, Omland, and Stammeier}

$\linebreak$

For any nonempty subset $S \subset \mathbb{N}^\times \setminus \{1\}$ consisting of mutually coprime numbers, Barlak, Omland, and Stammeier in \cite{MR3784245} defined a unital C*-algebra $\mathcal{Q}_S$ and they made a conjecture about the K-theory of this C*-algebra.

\begin{conj}[{\cite[Conjecture~6.5]{MR3784245}}]\label{BOS conj}
\[
(K_0(\mathcal{Q}_S),[1_{\mathcal{Q}_S}], K_1(\mathcal{Q}_S) ) \cong (\mathbb{Z}^{2^{\vert S\vert-1}}\oplus (\mathbb{Z}/g_S \mathbb{Z})^{2^{\vert S\vert-2}},(0,e),\mathbb{Z}^{2^{\vert S\vert-1}}\oplus (\mathbb{Z}/g_S\mathbb{Z})^{2^{\vert S\vert-2}}),
\]
where $g_S=\mathrm{gcd}\{n-1:n \in S\},e=(\delta_{1,i}+\mathbb{Z}/g\Lambda\mathbb{Z})_{1 \leq i \leq k} \in  (\mathbb{Z}/g\Lambda\mathbb{Z})^{2^{k-2}}$.
\end{conj}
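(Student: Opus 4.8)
The plan is in two stages, matching the structure of this subsection. The first stage is to realize $\mathcal{Q}_S$, and more generally the version of it attached to an arbitrary subset of $\mathbb{N}^\times\setminus\{1\}$, as the self-similar $k$-graph C*-algebra $\mathcal{O}_{\mathbb{Z},\Lambda}$ of a standard product of odometers, so that Conjecture~\ref{BOS conj} becomes the case $k=|S|$ of Conjecture~\ref{decompose K(O_G,Lambda)}. Writing $S=\{n_1,\dots,n_k\}$ with the $n_i$ mutually coprime, I would take the single-vertex $k$-graph $\Lambda$ with $\vert\Lambda^{e_i}\vert=n_i$ whose factorization rule on the two-colour squares $\Lambda^{e_i+e_j}$ is the one of Definition~\ref{standard prod of odo}: the paths $x^i_\fs x^j_\ft$ and $x^j_{\ft'}x^i_{\fs'}$ are declared equal precisely when $\fs+\ft n_i\equiv\ft'+\fs' n_j\pmod{n_in_j}$, which since $\gcd(n_i,n_j)=1$ is exactly the Chinese Remainder Theorem; the associativity of this rule on the higher-colour cubes is readily verified, being the consistency of mixed-radix expansions. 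Endowed with the odometer $\mathbb{Z}$-action and cocycle of that definition, $(\mathbb{Z},\Lambda)$ is a standard product of odometers with $k=|S|$ and $g\Lambda=\gcd\{n_i-1\}=g_S$.

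The second step of stage one is to produce the isomorphism $\mathcal{Q}_S\cong\mathcal{O}_{\mathbb{Z},\Lambda}$. I would send the unitary generator of $\mathcal{Q}_S$ to $u_1$ and the isometry attached to $n_i$ to $s_{x^i_0}$, so that the translates $u_1^{\fs}s_{x^i_0}$ match the generators $s_{x^i_\fs}$, and then verify that the defining relations of $\mathcal{Q}_S$ — the scaling of the isometries by the unitary, their mutual commutation coming from the coprime structure, and the orthogonality and saturation of their ranges — become the Cuntz--Krieger $\Lambda$-relations together with the self-similar relation of Definition~\ref{D:ssfamily} (possibly after replacing the unitary generator by its inverse, which by the Remark above does not affect the crossed-product picture). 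Surjectivity is immediate from the spanning description $\mathcal{O}_{\mathbb{Z},\Lambda}=\overline{\spn}\{s_\mu u_g s_\nu^*\}$; injectivity follows from the gauge-invariant uniqueness theorem for self-similar $k$-graph C*-algebras, matching the natural $\mathbb{T}^k$-action on $\mathcal{Q}_S$ that rescales the $i$-th isometry with the gauge action $\gamma$ and checking faithfulness on the common fixed-point algebra; and a short bookkeeping check identifies $[1_{\mathcal{Q}_S}]$ with $[1_{\mathcal{O}_{\mathbb{Z},\Lambda}}]$. Granting Conjecture~\ref{decompose K(O_G,Lambda)} (whose hypothesis $k\ge2$ corresponds to $|S|\ge2$) and substituting $k=|S|$, $g\Lambda=g_S$ then yields Conjecture~\ref{BOS conj} verbatim, the distinguished class $(0,e)$ included.

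An unconditional proof would require establishing Conjecture~\ref{decompose K(O_G,Lambda)} itself, and the second stage of the plan is to run the homological machinery already assembled in this section. By Corollary~\ref{O_{G,Lambda star Z^k} ME O_{G,Lambda}} it suffices to compute $K_*$ of $\mathcal{O}_{\mathbb{Z},\Lambda\star\mathbb{Z}^k}\rtimes_{\widehat{\gamma}^{-1}}\mathbb{Z}^k$, and Proposition~\ref{O_Z,Lambda star Zk is AT} supplies all of the input: $\mathcal{O}_{\mathbb{Z},\Lambda\star\mathbb{Z}^k}$ is AT, with its $K$-groups and its $\mathbb{Z}^k$-action (a dilation on $K_0$, trivial on $K_1$) computed explicitly. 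The cohomological spectral sequence of the Remark following Conjecture~\ref{decompose K(O_G,Lambda)} then has its $E_2$-page identified — $(\mathbb{Z}/g\Lambda\mathbb{Z})^{C_{k-1}^{p-1}}$ for $1\le p\le k$ in even total degree, $\mathbb{Z}^{C_k^p}$ for $0\le p\le k$ in odd total degree — and a parity count of these two rows already yields exactly the free rank $2^{k-1}$ and the torsion rank $2^{k-2}$ in each of $K_0$ and $K_1$ predicted by the conjecture. What remains is therefore (i) to show that every higher differential $d_r$, $2\le r\le k$, vanishes, so that $E_{k+1}=E_\infty$ and no classes are lost — the new phenomenon here being the $d_2$ mapping the free row into the torsion row, since the free part itself is already under control by the work of Barlak, Omland, and Stammeier — and (ii) to show that the filtration extensions reassembling $E_\infty$ into $K_0$ and $K_1$ split, so that the torsion sits as a genuine direct summand of the stated size rather than being absorbed into the free part. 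I expect (i) and (ii) together to be the main obstacle; this is precisely the torsion part that Barlak, Omland, and Stammeier left open, and the single-vertex $k$-graph reformulation makes transparent that it is controlled by the factorization rule of the odometer — equivalently by the solution of the associated Yang--Baxter-type equation — so that no uniform answer should be expected without genuinely new input. Stage one is what I would record in the present paper, since it reduces the whole family of BOS-type conjectures, uniformly in $S$, to the single clean statement of Conjecture~\ref{decompose K(O_G,Lambda)}.
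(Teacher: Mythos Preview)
Your proposal correctly recognizes that the statement is a conjecture which the paper does not prove unconditionally; the paper's contribution is precisely your Stage~1 --- reducing Conjecture~\ref{BOS conj} to Conjecture~\ref{decompose K(O_G,Lambda)} via an explicit isomorphism $\mathcal{Q}_S \cong \mathcal{O}_{\mathbb{Z},\Lambda_S}$ (Theorem~\ref{Q_S iso to O_G,Lambda}) --- and your Stage~2 (higher differentials and the extension problem in the spectral sequence) is exactly the open territory the paper leaves untouched, as the Remark following Conjecture~\ref{decompose K(O_G,Lambda)} makes explicit.

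Two technical points distinguish your sketch from the paper's execution. First, the paper establishes the isomorphism by constructing homomorphisms in \emph{both} directions via the respective universal properties (sending $t_{x^i_\fs}\mapsto u^\fs s_{n_i}$, $v\mapsto u$ one way and $s_{n_i}\mapsto t_{x^i_0}$, $u\mapsto v$ the other) and checking they are mutually inverse on generators; your route --- one map plus gauge-invariant uniqueness --- has the direction backwards, since the gauge-invariant uniqueness theorem for self-similar $k$-graph C*-algebras controls maps \emph{out of} $\mathcal{O}_{\mathbb{Z},\Lambda}$, not injectivity of your map $\mathcal{Q}_S\to\mathcal{O}_{\mathbb{Z},\Lambda}$. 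This is easily repaired by building the inverse as the paper does; no sign flip on the unitary is needed. Second, the paper works for \emph{arbitrary} $S\subset\mathbb{N}^\times\setminus\{1\}$: the standard factorization rule $\fs+\ft n_i=\ft'+\fs' n_j$ already defines a valid single-vertex $k$-graph without coprimality (the associativity is the mixed-radix consistency you mention), and the commutation $s_ns_m=s_ms_n$ is an axiom of $\mathcal{Q}_S$ rather than a consequence of the Chinese Remainder Theorem, so your restriction to coprime $S$ and your appeal to CRT are both unnecessary. The paper's supporting Lemma~\ref{s_n* u^z s_n=0} and Proposition~\ref{s_n*s_m} are what make the relation-checking in the non-coprime setting go through cleanly.
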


In the final subsection, we first generalize the construction of $\mathcal{Q}_S$ to an arbitrary nonempty subset of $\mathbb{N}^\times\setminus \{1\}$, and we show that $\mathcal{Q}_S$ is indeed a self-similar $\vert S\vert$-graph C*-algebra of a standard product of odometers, finally we connect Conjecture~\ref{decompose K(O_G,Lambda)} with Conjecture~\ref{BOS conj}.

\begin{defn}[{cf. \cite[Definition~2.1]{MR3784245}}]\label{define Q_S}
Let $S$ be a nonempty subset of $\mathbb{N}^\times \setminus \{1\}$. Define $\mathcal{Q}_S$ to be the universal unital C*-algebra generated by a family of isometries $\{s_n\}_{n \in S}$ and a unitary $u$ satisfying for any $n,m \in S$,
\begin{enumerate}
\item\label{s_n s_m=s_m s_n} $s_n s_m=s_m s_n$;
\item\label{s_n u=u^n s_n} $s_n u=u^n s_n$;
\item\label{sum u^i s_n (u^i s_n)*} $\sum_{i=0}^{n-1}u^i s_n s_n^*u^{-i}=1_{\mathcal{Q}_S}$.
\end{enumerate}
\end{defn}

\begin{rmk}
It is very natural to extend Conjecture~\ref{BOS conj} to the case that $S$ is any nonempty subset of $\mathbb{N}^\times \setminus \{1\}$.
\end{rmk}

\begin{lemma}\label{s_n* u^z s_n=0}
Let $S$ be a nonempty subset of $\mathbb{N}^\times \setminus \{1\}$, and let $B$ be a unital C*-algebra generated by a family of isometries $\{S_n\}_{n \in S}$ and a unitary $U$ satisfying Conditions~(\ref{s_n u=u^n s_n}), (\ref{sum u^i s_n (u^i s_n)*}) of Definition~\ref{define Q_S}. Then for $n \in S,z \in \mathbb{Z} \setminus n \mathbb{Z}$, we have $S_n^* U^z S_n=0$.
\end{lemma}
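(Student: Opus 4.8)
The plan is to use Condition~(\ref{sum u^i s_n (u^i s_n)*}) of Definition~\ref{define Q_S} to expand the unit and then exploit the commutation relation~(\ref{s_n u=u^n s_n}) together with the fact that $S_n$ is an isometry. First I would write, for the given $n \in S$ and $z \in \mathbb{Z}$,
\[
S_n^* U^z S_n = S_n^* U^z \Bigl( \sum_{i=0}^{n-1} U^i S_n S_n^* U^{-i} \Bigr) S_n = \sum_{i=0}^{n-1} S_n^* U^{z+i} S_n S_n^* U^{-i} S_n,
\]
using $S_n^* S_n = 1_B$ inside the sum. Now I would bring each $U^{-i}$ past $S_n$: applying~(\ref{s_n u=u^n s_n}) in the form $S_n U = U^n S_n$ (equivalently $U^{-i} S_n = S_n U^{-ni}$ for $0 \le i \le n-1$), the last factor $U^{-i} S_n$ becomes $S_n U^{-ni}$, so each summand is $S_n^* U^{z+i} S_n S_n^* S_n U^{-ni} = S_n^* U^{z+i} S_n U^{-ni}$, and hence $S_n^* U^z S_n = \sum_{i=0}^{n-1} S_n^* U^{z+i} S_n U^{-ni}$. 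This identity is the engine of the proof.

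The key step is to analyze which terms in this sum are nonzero. Multiplying the identity on the right by $U^{nj}$ for a fixed $j$, one sees the expression $S_n^* U^z S_n$ is governed by a self-consistent recursion modulo $n$; concretely, I would argue that $S_n^* U^w S_n$ depends only on $w \bmod n$ in a controlled way, and that summing over a full residue system forces the off-diagonal contributions to vanish. More directly: replace $z$ by $z$ and iterate — substituting the identity back into itself — to show that $S_n^* U^z S_n = (S_n^*)^m U^{?} (S_n)^m \cdot (\text{stuff})$ shrinks, or alternatively observe that the map $a \mapsto \sum_{i=0}^{n-1} U^i S_n a S_n^* U^{-i}$ is a unital endomorphism-like expansion so that $S_n S_n^* \le 1$ and the $U^i S_n S_n^* U^{-i}$ are mutually orthogonal projections summing to $1$. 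From the orthogonality, $S_n^* U^{z} (U^i S_n S_n^* U^{-i}) U^{-z} S_n$ picks out exactly the block where $U^{-z} U^i$ lands back in the range of $S_n$, i.e.\ where $z \equiv i \pmod n$; if $z \notin n\mathbb{Z}$, the $i=0$ block never contributes, and a counting/telescoping argument across all $i$ yields $S_n^* U^z S_n = 0$.

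The cleanest route, which I would actually write up, is: the projections $p_i := U^i S_n S_n^* U^{-i}$ for $0 \le i \le n-1$ are mutually orthogonal (since $S_n^* U^{i-i'} S_n$ would need to be examined — but orthogonality of the $p_i$ is forced because they are subprojections of $1_B$ summing to $1_B$ only if pairwise orthogonal; this follows from the standard fact that if a sum of projections equals a projection then they are orthogonal). Then $S_n S_n^* = p_0 \perp p_j$ for $1 \le j \le n-1$, so $S_n^* U^{-j} S_n \cdot S_n^* U^{j} S_n = S_n^* (U^{-j} S_n S_n^* U^j) S_n$ and since $U^{-j} S_n S_n^* U^{j} = p_{-j \bmod n}$ which is orthogonal to $p_0 = S_n S_n^*$ whenever $j \not\equiv 0$, we get $S_n^* \cdot 0 \cdot S_n$-type cancellation. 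Writing $z = qn + j$ with $0 < j < n$ (using $z \notin n\mathbb{Z}$), we have $U^z S_n = U^{qn} U^j S_n = U^{qn} S_n' $ where $U^j S_n S_n^* U^{-j} = p_j \perp p_0$, hence $S_n^* U^z S_n = S_n^* U^{qn} U^j S_n$ and multiplying by its adjoint and using $S_n^* U^{qn} = S_n^* U^{qn} S_n S_n^*$ is not immediate, so instead: $S_n S_n^* U^z S_n = p_0 U^j S_n$ up to the $U^{qn}$ which commutes past via $U^{qn} S_n = S_n U^{q}$. Thus $S_n^* U^z S_n = S_n^* U^{qn} U^j S_n = S_n^* S_n U^q \cdot(\text{?})$ — the point being that $U^j S_n$ has range in $p_j B$, so $S_n S_n^* U^j S_n = p_0 p_j (\cdots) = 0$, giving $S_n^* U^j S_n = S_n^* S_n S_n^* U^j S_n = 0$, and then $S_n^* U^z S_n = S_n^* U^{qn+j} S_n = S_n^* U^{qn} S_n \cdot$-free manipulation via $U^{qn} = (\text{commute through } S_n)$ reduces it to $U^{q} S_n^* U^{j} S_n = 0$.

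I expect the main obstacle to be bookkeeping the commutations of powers of $U$ through $S_n$ cleanly so that the residue $z \bmod n$ is correctly isolated; the conceptual content — orthogonality of the projections $p_i = U^i S_n S_n^* U^{-i}$ and that $U^z$ shifts the range of $S_n$ from $p_0$ to $p_{z \bmod n}$ — is short, but one must be careful that $U$ is merely a unitary (not related to the $S_m$ for $m \ne n$ in this lemma, since only Conditions~(\ref{s_n u=u^n s_n}) and~(\ref{sum u^i s_n (u^i s_n)*}) are assumed), so all arguments must stay within the subalgebra generated by $S_n$ and $U$. Once the identity $S_n^* U^z S_n = 0$ for $z \in \mathbb{Z} \setminus n\mathbb{Z}$ is established this way, the lemma is complete.
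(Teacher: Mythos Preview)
Your first manipulation contains an error: from $S_n U = U^n S_n$ one gets $S_n U^k = U^{nk} S_n$ for all $k\in\mathbb{Z}$, and hence (taking $k=-i$) $U^{-ni} S_n = S_n U^{-i}$, \emph{not} $U^{-i} S_n = S_n U^{-ni}$ as you write. So the ``engine identity'' $S_n^* U^z S_n = \sum_i S_n^* U^{z+i} S_n\, U^{-ni}$ does not follow, and the first two paragraphs should be discarded.

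The ``cleanest route'' you eventually commit to is correct and in fact more direct than the paper's argument. Cleaned up, it reads: the projections $p_i := U^i S_n S_n^* U^{-i}$ ($0\le i\le n-1$) sum to $1_B$ by Condition~(\ref{sum u^i s_n (u^i s_n)*}), hence are pairwise orthogonal (for projections $p\le 1-q$ one has $p(1-q)=p$, i.e.\ $pq=0$). Writing $z=qn+j$ with $1\le j\le n-1$, the adjoint of $S_n U^{q}=U^{qn}S_n$ gives $S_n^*U^{qn}=U^{q}S_n^*$, so $S_n^*U^z S_n = U^q\, S_n^* U^j S_n$. Finally $S_n^* U^j S_n = S_n^*(S_nS_n^*)U^jS_n = S_n^*\, p_0\,(p_j U^j S_n) = S_n^*\, (p_0 p_j)\, U^j S_n = 0$.

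The paper takes a different route: it computes $(S_n^* U^z S_n)(S_n^* U^z S_n)^*$, reduces it to $S_n^*U^l S_nS_n^* U^{-l}S_n$ with $1\le l\le n-1$ via the same commutation, expands $S_nS_n^* = 1-\sum_{i=1}^{n-1}p_i$, and observes that the result equals a \emph{negative} sum of terms of the form $bb^*$, forcing it to be zero by positivity. Your orthogonality argument avoids this positivity trick and is shorter; the paper's computation has the minor advantage of not needing to appeal separately to the fact that projections summing to $1$ are orthogonal. Either way, strip out the false starts and present only the three-line argument above.
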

\begin{proof}
We may assume that $z >0$. Write $z=wn+l$, for some $w \geq 0,1 \leq l \leq n-1$. We calculate that
\begin{align*}
(S_n^* U^z S_n)(S_n^* U^z S_n)^*&=S_n^* U^l U^{wn} S_n S_n^* U^{-wn} U^{-l} S_n
\\&=S_n^* U^l (U^{wn} S_n) (U^{wn} S_n)^* U^{-l} S_n
\\&=S_n^* U^l ( S_n U^w) (S_n U^w)^* U^{-l} S_n
\\&=S_n^* U^l  S_n S_n^* U^{-l} S_n
\\&=S_n^* U^l  (1_B-\sum_{i=1}^{n-1}U^i S_n S_n^* U^{-i}) U^{-l} S_n
\\&=1_B-S_n^* U^n S_n S_n^* U^{-n} S_n-\sum_{1 \leq i \leq n-1,i \neq n-l}S_n^* U^{l+i} S_n S_n^* U^{-l-i} S_n
\\&=-\sum_{1 \leq i \leq n-1,i \neq n-l}S_n^* U^{l+i} S_n S_n^* U^{-l-i} S_n.
\end{align*}
So $(S_n^* U^z S_n)(S_n^* U^z S_n)^*=0$. Hence $S_n^* U^z S_n=0$.
\end{proof}

\begin{prop}\label{s_n*s_m}
Let $S$ be a nonempty subset of $\mathbb{N}^\times \setminus \{1\}$, and let $B$ be a unital C*-algebra generated by a family of isometries $\{S_n\}_{n \in S}$ and a unitary $U$ satisfying Conditions~(\ref{s_n u=u^n s_n}), (\ref{sum u^i s_n (u^i s_n)*}) of Definition~\ref{define Q_S}. Then $\{S_n\}_{n \in S}$ and $U$ satisfy Condition~(\ref{s_n s_m=s_m s_n}) of Definition~\ref{define Q_S} if and only if for any $n,m \in S,S_n^*S_m=\sum_{0 \leq l \leq nm-1,l \in n\mathbb{Z} \cap m \mathbb{Z}}U^{l/n} S_m S_n^* U^{-l/m}$.
\end{prop}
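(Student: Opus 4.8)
The plan is to prove both directions by the same mechanism: one uses relation~(\ref{sum u^i s_n (u^i s_n)*}) of Definition~\ref{define Q_S} to insert a resolution of the identity, relation~(\ref{s_n u=u^n s_n}) to slide powers of $U$ across the isometries, and Lemma~\ref{s_n* u^z s_n=0} to kill the resulting off-diagonal terms. Fix $n,m\in S$, write $d=\gcd(n,m)$ and $L=\operatorname{lcm}(n,m)=nm/d$, and note $n\mathbb{Z}\cap m\mathbb{Z}=L\mathbb{Z}$, so the index set $\{l:0\le l\le nm-1,\ l\in n\mathbb{Z}\cap m\mathbb{Z}\}$ equals $\{jL:0\le j\le d-1\}$ and, for $l=jL$, one has $l/n=jm/d$ and $l/m=jn/d$. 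Thus the right-hand side of the asserted identity is $T_{n,m}:=\sum_{j=0}^{d-1}U^{jm/d}\,S_m S_n^*\,U^{-jn/d}$.

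The first step I would establish, \emph{assuming the formula}, is the auxiliary identity $S_n^* S_m S_n=S_m$ for all $n,m\in S$. Indeed, $T_{n,m}S_n=\sum_{j=0}^{d-1}U^{jm/d}S_m\bigl(S_n^*U^{-jn/d}S_n\bigr)$, and since $jn/d=j(n/d)\notin n\mathbb{Z}$ for $1\le j\le d-1$, Lemma~\ref{s_n* u^z s_n=0} annihilates every term except $j=0$, which contributes $S_m S_n^*S_n=S_m$. So $S_n^*S_m S_n=T_{n,m}S_n=S_m$. Granting this, the implication ``formula $\Rightarrow$ Condition~(\ref{s_n s_m=s_m s_n})'' is immediate: put $V=S_n S_m$ and $W=S_m S_n$, both isometries. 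Applying the auxiliary identity to $(n,m)$ and to $(m,n)$ gives $V^*W=S_m^*S_n^*S_m S_n=S_m^*(S_n^*S_m S_n)=S_m^*S_m=1$ and $W^*V=S_n^*S_m^*S_n S_m=S_n^*(S_m^*S_n S_m)=S_n^*S_n=1$, hence $(V-W)^*(V-W)=V^*V-V^*W-W^*V+W^*W=0$, so $V=W$, i.e.\ $S_n S_m=S_m S_n$.

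For the converse, assume Condition~(\ref{s_n s_m=s_m s_n}). I would compute $S_n^*S_m$ by inserting $1_B=\sum_{i=0}^{n-1}U^iS_nS_n^*U^{-i}$ (Condition~(\ref{sum u^i s_n (u^i s_n)*})) directly after $S_m$, moving each $U^i$ to the left past $S_m$ via $S_mU^i=U^{mi}S_m$ (Condition~(\ref{s_n u=u^n s_n})), and then commuting $S_mS_n=S_nS_m$; this gives $S_n^*S_m=\sum_{i=0}^{n-1}\bigl(S_n^*U^{mi}S_n\bigr)S_mS_n^*U^{-i}$. By Lemma~\ref{s_n* u^z s_n=0} the factor $S_n^*U^{mi}S_n$ vanishes unless $n\mid mi$, which (since $\gcd(n/d,m/d)=1$) is equivalent to $(n/d)\mid i$; writing the surviving indices as $i=(n/d)j$ with $0\le j\le d-1$ and using $U^{mi}=U^{n(mj/d)}$ together with Condition~(\ref{s_n u=u^n s_n}) yields $S_n^*U^{mi}S_n=S_n^*S_nU^{mj/d}=U^{mj/d}$. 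Substituting back gives exactly $\sum_{j=0}^{d-1}U^{mj/d}S_mS_n^*U^{-jn/d}=T_{n,m}$, as claimed.

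The only place calling for care is the elementary number theory underlying the index bookkeeping: identifying $n\mathbb{Z}\cap m\mathbb{Z}=\operatorname{lcm}(n,m)\mathbb{Z}$, checking that the surviving indices in the sum are precisely the multiples of $n/d$ in $\{0,\dots,n-1\}$, and verifying that after the substitution $i=(n/d)j$ all the exponents appearing ($mj/d$, $nj/d$, and the intermediate $n(mj/d)$) are integers so that Condition~(\ref{s_n u=u^n s_n}) applies verbatim. Everything else is a mechanical application of Lemma~\ref{s_n* u^z s_n=0} and relations~(\ref{s_n u=u^n s_n})--(\ref{sum u^i s_n (u^i s_n)*}); the one small idea is to notice that the identity $S_n^*S_m S_n=S_m$ is the right lever, reducing the equality of the two isometries $S_n S_m$ and $S_m S_n$ to the one-line computation $(V-W)^*(V-W)=0$.
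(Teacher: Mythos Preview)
Your proof is correct and uses the same core mechanism as the paper's---insert a resolution of the identity from Condition~(\ref{sum u^i s_n (u^i s_n)*}), slide powers of $U$ through the isometries via Condition~(\ref{s_n u=u^n s_n}), and invoke Lemma~\ref{s_n* u^z s_n=0} to kill off-diagonal terms; then for the converse show $(S_nS_m-S_mS_n)^*(S_nS_m-S_mS_n)=0$.

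The one genuine difference is in the direction ``commutation $\Rightarrow$ formula''. The paper inserts \emph{two} resolutions of the identity (one for $n$ and one for $m$), obtaining a double sum $\sum_{i=0}^{n-1}\sum_{j=0}^{m-1}$ that is then reindexed by $l=i+nj\in\{0,\dots,nm-1\}$ before Lemma~\ref{s_n* u^z s_n=0} is applied to both factors $(S_n^*U^lS_n)$ and $(S_m^*U^{-l}S_m)$. You instead insert only the $n$-resolution, placed to the right of $S_m$, and let $S_mU^i=U^{mi}S_m$ do the work of producing the divisibility condition $n\mid mi$ directly; this avoids the double sum and the reindexing step. Your route is a bit more economical. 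In the other direction, your auxiliary identity $S_n^*S_mS_n=S_m$ is just a one-step factoring of the paper's computation $S_m^*S_n^*S_mS_n=1_B$, so there the two arguments are cosmetically different but substantively identical.
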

\begin{proof}
First of all, suppose that $\{S_n\}_{n \in S}$ and $U$ satisfy Condition~(\ref{s_n s_m=s_m s_n}) of Definition~\ref{define Q_S}. For $n,m \in S$, we compute that
\begin{align*}
S_n^*S_m&=\sum_{i=0}^{n-1}S_n^* U^i S_n S_n^*U^{-i} S_m
\\&=\sum_{i=0}^{n-1}\sum_{j=0}^{m-1}S_n^* U^i S_n U^j S_m S_m^* U^{-j} S_n^* U^{-i} S_m
\\&=\sum_{i=0}^{n-1}\sum_{j=0}^{m-1}S_n^* U^{i+nj} S_n S_m S_m^* S_n^* U^{-i-nj} S_m
\\&=\sum_{l=0}^{nm-1}S_n^* U^{l} S_n S_m S_m^* S_n^*U^{-l}S_m
\\&=\sum_{l=0}^{nm-1}(S_n^* U^{l} S_n) S_m S_n^* (S_m^* U^{-l}S_m)
\\&=\sum_{0 \leq l \leq nm-1,l \in n\mathbb{Z} \cap m \mathbb{Z}}U^{l/n} S_m S_n^* U^{-l/m} \text{ (by Lemma~\ref{s_n* u^z s_n=0})}.
\end{align*}

Conversely, suppose that for any $n,m \in S,S_n^*S_m=\sum_{0 \leq l \leq nm-1,l \in n\mathbb{Z} \cap m \mathbb{Z}}U^{l/n} S_m S_n^* U^{-l/m}$. For $n,m \in S$, we calculate that
\begin{align*}
S_m^* S_n^* S_m S_n&=\sum_{0 \leq l \leq nm-1,l \in n\mathbb{Z} \cap m \mathbb{Z}}(S_m^*U^{l/n} S_m) (S_n^* U^{-l/m}S_n)
\\&=1_B+\sum_{1 \leq l \leq nm-1,l \in n\mathbb{Z} \cap m \mathbb{Z}}(S_m^*U^{l/n} S_m) (S_n^* U^{-l/m}S_n)
\\&=1_B \text{ (by Lemma~\ref{s_n* u^z s_n=0})}.
\end{align*}
So
\[
(S_n S_m-S_m S_n)^*(S_n S_m-S_m S_n)=2 \cdot 1_B-S_m^* S_n^* S_m S_n-S_n^* S_m^* S_n S_m=0.
\]
Hence $S_n S_m=S_m S_n$.
\end{proof}

\begin{rmk}\label{simplification of O_G,Lambda for prod of odo}
Let $(\mathbb{Z},\Lambda)$ be a standard product of odometers. Then $\mathcal{O}_{\mathbb{Z},\Lambda}$ is a universal unital C*-algebra generated by a family of isometries $\{s_{x_{\fs}^{i}}\}_{1 \leq i \leq k,0 \leq \fs \leq n_i-1}$ and a unitary $u$ satisfying that
\begin{enumerate}
\item\label{CK relation}$\sum_{\fs=0}^{n_i-1}s_{x_{\fs}^{i}}s_{x_{\fs}^{i}}^*=1_{\mathcal{O}_{\mathbb{Z},\Lambda}}$ for all $1 \leq i \leq k$;
\item $s_{x_{\fs}^{i}}s_{x_{\ft}^{j}}=s_{x_{\ft'}^{j}}s_{x_{\fs'}^{i}}$ for all $1\leq i<j \leq k,0\le \fs,\fs'\le n_i-1,0\le \ft,\ft'\le n_j-1$ with $\fs+\ft n_i=\ft'+\fs' n_j$;
\item\label{fg_s^i} $u s_{x_{\fs}^{i}}= \begin{cases}
    s_{x_{\fs+1}^{i}} &\text{ if } 0 \leq \fs <n_i-1 \\
    s_{x_{0}^{i}} u &\text{ if } \fs=n_i-1
\end{cases}
\text{, for all } 1\le i\le k,0\le \fs\le n_i-1$.
\end{enumerate}
\end{rmk}

\begin{thm}\label{Q_S iso to O_G,Lambda}
Let $S$ be a nonempty subset of $\mathbb{N}^\times \setminus \{1\}$. We enumerate $S=\{1<n_1< \dots <n_k\}$. Denote by $(\mathbb{Z},\Lambda_S)$ the standard product of odometers such that $\vert\Lambda_S^{e_i}\vert=n_i$ for all $1 \leq i \leq k$. Then $\mathcal{Q}_S \cong \mathcal{O}_{\mathbb{Z},\Lambda_S}$.
\end{thm}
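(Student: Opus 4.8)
The plan is to exhibit mutually inverse unital $\ast$-homomorphisms between $\mathcal{Q}_S$ and $\mathcal{O}_{\mathbb{Z},\Lambda_S}$, using the presentation of $\mathcal{Q}_S$ from Definition~\ref{define Q_S} and the concrete presentation of $\mathcal{O}_{\mathbb{Z},\Lambda_S}$ recorded in Remark~\ref{simplification of O_G,Lambda for prod of odo}. Write $n_i:=\vert\Lambda_S^{e_i}\vert$, so that $S=\{n_1<\cdots<n_k\}$; let $\{s_{x_{\fs}^{i}}\}_{1\le i\le k,\,0\le\fs\le n_i-1}$ and $u$ denote the generators of $\mathcal{O}_{\mathbb{Z},\Lambda_S}$ as in that remark, and let $\{s_n\}_{n\in S}$ and $u$ denote the generators of $\mathcal{Q}_S$. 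I would build $\phi\colon\mathcal{Q}_S\to\mathcal{O}_{\mathbb{Z},\Lambda_S}$ via $\phi(s_{n_i}):=s_{x_{0}^{i}}$, $\phi(u):=u$, and $\psi\colon\mathcal{O}_{\mathbb{Z},\Lambda_S}\to\mathcal{Q}_S$ via $\psi(s_{x_{\fs}^{i}}):=u^{\fs}s_{n_i}$ for $0\le\fs\le n_i-1$ and $\psi(u):=u$, each obtained by checking the defining relations of the appropriate universal object.

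For $\phi$, iterating relation~(\ref{fg_s^i}) of Remark~\ref{simplification of O_G,Lambda for prod of odo} gives $u^{\fs}s_{x_{0}^{i}}=s_{x_{\fs}^{i}}$ for $0\le\fs\le n_i-1$ and $u^{n_i}s_{x_{0}^{i}}=s_{x_{0}^{i}}u$: the second identity is relation~(\ref{s_n u=u^n s_n}) of Definition~\ref{define Q_S} with $n=n_i$; substituting the first into $\sum_{\fs=0}^{n_i-1}u^{\fs}s_{x_{0}^{i}}s_{x_{0}^{i}}^{*}u^{-\fs}$ and applying relation~(\ref{CK relation}) of the remark yields relation~(\ref{sum u^i s_n (u^i s_n)*}); and $s_{x_{0}^{i}}s_{x_{0}^{j}}=s_{x_{0}^{j}}s_{x_{0}^{i}}$, i.e.\ relation~(\ref{s_n s_m=s_m s_n}), is the instance $\fs=\ft=\ft'=\fs'=0$ of relation~(2) of the remark. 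For $\psi$, each $u^{\fs}s_{n_i}$ is an isometry and $u$ is unitary; relation~(\ref{CK relation}) of the remark is exactly relation~(\ref{sum u^i s_n (u^i s_n)*}) of $\mathcal{Q}_S$; relation~(\ref{fg_s^i}) follows from $u\,u^{\fs}s_{n_i}=u^{\fs+1}s_{n_i}$ together with $u^{n_i}s_{n_i}=s_{n_i}u$; and, iterating relation~(\ref{s_n u=u^n s_n}) to get $s_{n_i}u^{\ft}=u^{n_i\ft}s_{n_i}$ and then using $s_{n_i}s_{n_j}=s_{n_j}s_{n_i}$, one computes $\psi(s_{x_{\fs}^{i}}s_{x_{\ft}^{j}})=u^{\fs+n_i\ft}s_{n_i}s_{n_j}$ and $\psi(s_{x_{\ft'}^{j}}s_{x_{\fs'}^{i}})=u^{\ft'+n_j\fs'}s_{n_j}s_{n_i}$, which agree whenever $\fs+\ft n_i=\ft'+\fs'n_j$, i.e.\ precisely on the pairs of indices appearing in relation~(2) of the remark.

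Finally, $\psi(\phi(s_{n_i}))=\psi(s_{x_{0}^{i}})=s_{n_i}$ and $\phi(\psi(s_{x_{\fs}^{i}}))=\phi(u^{\fs}s_{n_i})=u^{\fs}s_{x_{0}^{i}}=s_{x_{\fs}^{i}}$, and both maps fix $u$; since these elements generate, $\psi\circ\phi=\id$ and $\phi\circ\psi=\id$, so $\phi$ is an isomorphism with inverse $\psi$. I expect the only step needing real care to be the commutation relation in both directions --- seeing that the mixed-radix ``carry'' constraint $\fs+\ft n_i=\ft'+\fs'n_j$ built into the notion of a standard product of odometers is exactly the identity among monomials in $\mathcal{Q}_S$ forced by $s_ns_m=s_ms_n$ and $s_nu=u^ns_n$ --- but this is a short explicit computation, not a genuine obstacle; everything else is a mechanical reading of the two universal presentations. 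One should note at the outset that $(\mathbb{Z},\Lambda_S)$ indeed exists: the factorization rule it requires is the bijection sending a pair $(\fs,\ft)$ with $0\le\fs<n_i$, $0\le\ft<n_j$ to the residue of $\fs+\ft n_i$ modulo $n_in_j$, cf.\ \cite{LY19}.
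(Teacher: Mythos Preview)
Your proposal is correct and follows essentially the same approach as the paper: define the map $\mathcal{Q}_S\to\mathcal{O}_{\mathbb{Z},\Lambda_S}$ by $s_{n_i}\mapsto s_{x_0^i}$, $u\mapsto u$, and its inverse $\mathcal{O}_{\mathbb{Z},\Lambda_S}\to\mathcal{Q}_S$ by $s_{x_{\fs}^i}\mapsto u^{\fs}s_{n_i}$, $u\mapsto u$, verify the defining relations on each side, and observe the composites are the identity on generators. The paper merely asserts the relation checks as ``straightforward'' where you have spelled them out, but the argument is the same.
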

\begin{proof}
Denote by $\{s_{n_i}\}_{1 \leq i \leq k}$ and $u$ the generators of $\mathcal{Q}_S$, and denote by $\{t_{x_{\fs}^{i}}:1\le i\le k,0\le \fs\le n_i-1\}$ and $v$ the generators of $\mathcal{O}_{\mathbb{Z},\Lambda_S}$ as discussed in Remark~\ref{simplification of O_G,Lambda for prod of odo}.

Define $V:=u, T_{x_{\fs}^i}:=u^{\fs} s_{n_i}$ for all $1\le i\le k,0\le \fs\le n_i-1$. Then $\{T_{x_{\fs}^{i}}:1\le i\le k,0\le \fs\le n_i-1\}$ and $V$ satisfy Conditions~(\ref{CK relation})--(\ref{fg_s^i}) of Remark~\ref{simplification of O_G,Lambda for prod of odo}. So there exists a homomorphism $\pi:\mathcal{O}_{\mathbb{Z},\Lambda_S} \to \mathcal{Q}_S$ such that $\pi(t_{x_{\fs}^i})=u^{\fs}s_{n_i},\pi(v)=u$ for all $1\le i\le k,0\le \fs\le n_i-1$.

Conversely define $U:=v,S_{n_i}:=t_{x_0^i}$ for all $1\le i\le k$. Then $\{S_{n_i}:1 \leq i \leq k\}$ and $U$ satisfy Conditions~(\ref{s_n s_m=s_m s_n})--(\ref{sum u^i s_n (u^i s_n)*}) of Definition~\ref{define Q_S}. So there exists a homomorphism $\rho:\mathcal{Q}_S \to \mathcal{O}_{\mathbb{Z},\Lambda_S}$ such that $\rho(u)=v,\rho(s_{n_i})=s_{x_0^i}$ for all $1 \leq i \leq k$.

Since $\rho\circ\pi=\id,\pi\circ\rho=\id$, we deduce that $\mathcal{Q}_S \cong \mathcal{O}_{\mathbb{Z},\Lambda_S}$.
\end{proof}

\begin{rmk}
For any nonempty subset $S \subset \mathbb{N}^\times \setminus \{1\}$, by the above theorem we can easily deduce that Conjecture~\ref{BOS conj} is contained in Conjecture~\ref{decompose K(O_G,Lambda)}.
\end{rmk}

%\begin{rmk}
%For $n \in S$, define $S_n:l^2(\mathbb{Z}) \to l^2(\mathbb{Z})$ by $S_n \delta_z:=\delta_{n z}$ for all $z \in \mathbb{Z}$. Then $S_n$ is an isometry. Define $U:l^2(\mathbb{Z}) \to l^2(\mathbb{Z})$ by $U \delta_z:=\delta_{z+1}$ for all $z \in \mathbb{Z}$. Then $U$ is a unitary. It is straightforward to check that $\{S_n\}_{n \in S}$ and $U$ satisfy Conditions~(\ref{s_n s_m=s_m s_n})--(\ref{sum u^i s_n (u^i s_n)*}) of Definition~\ref{define Q_S}. So there exists a representation $\pi$ of $\mathcal{Q}_S$ on $l^2(\mathbb{Z})$ such that $\pi(s_n)=S_n$ for all $n \in S$, and $\pi(u)=U$. Hence $\{s_n\}_{n \in S}$ and $u$ are nonzero elements in $\mathcal{Q}_S$.
%\end{rmk}

\end{document}